\newtheorem{theorem}{Theorem}[section]
\newtheorem{thm}[theorem]{Theorem}
\newtheorem{lem}[theorem]{Lemma}
\newtheorem{cor}[theorem]{Corollary}
\newtheorem{obs}[theorem]{Observation}
\newtheorem{prop}[theorem]{Proposition}
\theoremstyle{remark}
\newtheorem{remark}[theorem]{Remark}
\newcommand{\eps}{\varepsilon}
\newcommand{\prob}{\ensuremath{\mathbb{P}}}
\newcommand{\expec}{\ensuremath{\mathbb{E}}}
\newcommand{\var}{\ensuremath{\mathbb{V}}}
\newcommand{\real}{\ensuremath{\mathbb{R}}}
\newcommand{\Gnp}{\ensuremath{\mathcal{G}(n,p)}}
\newcommand{\Ln}{\ensuremath{\log_q}}
\newcommand{\myr}{\ensuremath{\hat{r}}}
\newcommand{\rt}{\right}
\newcommand{\lt}{\left}
\newcommand{\Ocal}{\ensuremath{\mathcal{O}}}
\newcommand{\Bin}{\ensuremath{\mathrm{Bin}}}
\definecolor{nicegreen}{RGB}{0,204,0}
\definecolor{myred}{RGB}{220,24,10}
\begin{document}
\title{On the Concentration of the Domination Number of the Random Graph}
\author[R.~Glebov]{Roman Glebov}
\address{Department of Mathematics, ETH, 8092 Zurich, Switzerland}
\email{roman.glebov@math.ethz.ch}
\thanks{The first author was supported by DFG within the research training group ``Methods for Discrete Structures".}
\author[A.~Liebenau]{Anita Liebenau}
\address{Department of Computer Science, University of Warwick, UK.}
\email{a.liebenau@warwick.ac.uk}
\thanks{The second author was supported by DFG within the graduate school Berlin Mathematical School.}
\author[T.~Szab\'o]{Tibor Szab\'o}
\address{Institut f\"{u}r Mathematik, Freie Universit\"at Berlin, Arnimallee 3-5, D-14195 Berlin, Germany}
\email{szabo@math.fu-berlin.de}
\thanks{Research of the third author was 
partially supported by DFG within the research training group 
``Methods for Discrete Structures''.}

\maketitle

\date{\today}
\begin{abstract}
In this paper we study the behaviour of the do\-mi\-na\-tion number of the Erd\H os-R\'enyi 
random graph $\Gnp$. Extending a result of
Wieland and Godbole we show that the domination number of $\Gnp$ is equal to one of two values 
asymptotically almost surely whenever $p \gg \frac{\ln^2n}{\sqrt{n}}$. The explicit values are exactly
at the first moment threshold, that is where the expected number of dominating sets starts to tend to infinity.
For small $p$ we also provide various non-concentration results which indicate why some sort of lower bound 
on the probability $p$ is necessary in our first theorem. 
Concentration, though not on a constant length interval, is proven for every $p\gg 1/n$.
These results show that unlike in the case of $p \gg \frac{\ln^2n}{\sqrt{n}}$  where
 concentration of the domination number happens around the first moment threshold,  for $p = O( \ln n/n)$ 
it does so around the median. In particular, in this range the two are far apart from each other.
\end{abstract}
 
\section{Introduction}

As usual, $\Gnp$ denotes the homogeneous Erd\H os-R\'enyi random graph 
model with $n$ labeled vertices in which edges are inserted independently with probability $p=p(n)$. A statement about $G \sim \Gnp$ is said to hold
{\em asymptotically almost surely (a.a.s.)} if
it holds with probability tending to 1 as $n \rightarrow \infty$.

An interesting phenomenon in random graph theory is that  
a.a.s.~many natural graph parameters tend to take their values on $\Gnp$ in 
a relatively short interval within their potential range, often around their
expectation.
Throughout the last three decades several of these parameters 
were shown to exhibit a very strong 
concentration, taking one of only two values  a.a.s.
The first such result is due to Matula \cite{m1972} who proved that 
when $p$ is constant then with probability tending to $1$ a graph $G\sim\Gnp$ 
has independence number $k-1$ or $k$, where $k=k(n,p)$ is an integer given by 
some explicit formula. The value of $k$ is determined by the simple first moment 
upper bound: it is the largest integer where the expected number of independent sets of that 
size does not anymore tend to $0$.
An extension of this to edge probabilities $p=\Omega (1/n)$, 
showing concentration of the independence number around this first moment bound, 
though not on a constant length interval, was obtained by Frieze~\cite{frieze1990}. 
Shamir and Spencer \cite{ss1987} were the first to prove a 
concentration result on the chromatic number of $\Gnp$. They showed that for arbitrary 
$p$ the chromatic number is contained in an interval of length $O(\sqrt{n})$ 
while for $p\leq n^{-\alpha}$ with $\alpha>5/6$ it is concentrated on an interval of 
just five integers a.a.s. This was strengthened to a two-point concentration 
by \L uzcak \cite{l1991} for any fixed $\alpha>5/6$, and later by Alon and Krivelevich \cite{ak1997}  for any fixed $\alpha > 1/2$.  
 The asymptotic formula for the value of the chromatic number 
was established by Bollob\'as~\cite{Bollobas1988} for constant $p$ and 
later by \L uczak~\cite{Luczak1991}  for all $p \gg 1/n$ to be equal to 
the simple lower bound given by the simple first moment upper bound on the 
independence number. 
The exact value of the chromatic number is still widely open for most values of $p$. 
For $p=d/n$, where $d>0$ is a fixed constant, Achlioptas and Naor \cite{an2005} 
pinned down the value $\chi(\Gnp)$ to be one of two precisely defined integers, while
for roughly ``half of the possible $d$'' they determined the unique value that $\chi (\Gnp)$
takes a.a.s. 
Coja-Oghlan and Vilenchik~\cite{CV} managed to extend this to the set of essentially all
$d$ (a set of relative density $1$). 
For $p <  n^{-\alpha}$, with $\alpha > 3/4$, Coja-Oghlan, Panagiotou, Steger~\cite{CPS}
determined the precise value of $\chi(\Gnp)$ up to three integers.
In a different model, M\"uller \cite{m2008} showed that for certain parameter values 
the clique number, the chromatic number, the degeneracy and the maximum degree 
of the random geometric graph are concentrated on two consecutive integers a.a.s.

In this paper we study the behaviour of the do\-mi\-na\-tion number of 
$\Gnp$ for various values of $p$, focusing in particular on the range of $p$ where
concentration on an interval of constant length might  happen. 
In  a graph $G=(V,E)$, we call a set $S\subseteq V$ {\em dominating} if every
vertex $v \in V$ is either a member of $S$, or adjacent to a member of $S$.
The {\em domination number D(G)} is the smallest cardinality of a dominating set in $G$.
Dominating sets and the domination number were
well-studied concepts of graph theory \cite{HHS} even before their importance in
theoretical computer science became apparent. 
Deciding the domination number being less than $k$ 
is one of the classic NP-complete problems~\cite{Garey-Johnson},
while dominating sets and its variants (e.g. connected dominating sets) are 
fundamental, e.g., in distributed computing, routing, and networks.

Early results on the concentration of the domination number of $G \sim \Gnp$
include the case when $p$ is fixed (see, for example,~\cite{ns1994} and~\cite{d2000}),
or when $p$ tends to 0 sufficiently slowly.
In this direction, Wieland and Godbole~\cite{wg2001} showed that  under the condition that
either $p$ is constant, or $p$ tends to 0 with
\[	 p = p(n) \geq 10 \sqrt{\frac{\ln \ln n}{\ln n}} , \]
the domination number $D(G)$ takes one of two consecutive integer values with probability tending to $1$,
as $n$ tends to infinity.
In \cite{wg2001} it is raised as an open problem
whether the validity of this two-point concentration result can be extended to a wider range of $p$.
In our main theorem we extend this range down to $p \gg \frac{\ln^2 n}{\sqrt{n}}$ and also include the range when
$p\rightarrow 1$.

Here and in the rest of the paper, we denote $q = \frac{1}{1-p}$ and $d=np$.

\begin{thm}
\label{MAIN}
Let $p=p(n)$ be such that $\frac{\ln^2n}{\sqrt{n}}\ll p < 1$, and let $G \sim \Gnp$.
Then there exists an $\myr = \myr (n,p(n))$, which is of the form
\[ \myr (n,p(n)) = \Ln \left(\frac{n \ln q}{\ln ^2 d} (1+o(1)) \right), \]
such that $D(G) = \lfloor \myr \rfloor + 1$ or $D(G) = \lfloor \myr \rfloor + 2$ a.a.s.
\end{thm}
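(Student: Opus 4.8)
The plan is a first- and second-moment analysis of the number of dominating sets of a fixed size. For an integer $r$ let $X_r$ be the number of dominating sets of size $r$ in $G\sim\Gnp$. A fixed $r$-set $S$ fails to dominate a vertex $v\notin S$ with probability $(1-p)^r=q^{-r}$, and these events are independent over the $n-r$ vertices outside $S$, so
\[\expec[X_r]=\binom{n}{r}\left(1-q^{-r}\right)^{n-r}.\]
Writing $\ln\expec[X_r]=\ln\binom{n}{r}-nq^{-r}(1+o(1))$, this is increasing in $r$ and passes through $0$ extremely quickly. I would \emph{define} $\myr$ as (essentially) the real solution of $\expec[X_{\myr}]=1$, i.e. of $nq^{-r}=\ln\binom{n}{r}$. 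Iterating this relation with $\ln\binom{n}{r}=r\ln(en/r)(1+o(1))$ and the identity $\ln(en/r)=\ln d\,(1+o(1))$ — which holds throughout $\frac{\ln^2n}{\sqrt n}\ll p<1$, where $\ln d=\Theta(\ln n)$ — produces the closed form $\myr=\Ln\!\left(\frac{n\ln q}{\ln^2 d}(1+o(1))\right)$ of the statement. It then remains to prove $D(G)\ge\lfloor\myr\rfloor+1$ and $D(G)\le\lfloor\myr\rfloor+2$ separately.

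For the lower bound I would show $\expec[X_{\lfloor\myr\rfloor}]\to0$. The consecutive ratio $\expec[X_{r+1}]/\expec[X_r]\approx\frac{n-r}{r+1}\exp(pnq^{-r})$ equals $\exp(\Theta(\ln^2 n))$ near the threshold, so $\expec[X_r]$ collapses as soon as $r$ drops below $\myr$; calibrating the $(1+o(1))$ in the definition of $\myr$ so that $\myr$ lies strictly above $\lfloor\myr\rfloor$ then gives $\expec[X_{\lfloor\myr\rfloor}]=o(1)$. Markov's inequality yields $X_{\lfloor\myr\rfloor}=0$ a.a.s., and since any dominating set of size at most $\lfloor\myr\rfloor$ extends to one of size exactly $\lfloor\myr\rfloor$ by adding arbitrary vertices, a.a.s. there is no dominating set of size $\le\lfloor\myr\rfloor$, i.e. $D(G)\ge\lfloor\myr\rfloor+1$.

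For the upper bound I would take $r=\lfloor\myr\rfloor+2$, note $\expec[X_r]\to\infty$ (two steps above the threshold push the expectation far past $1$), and run the second moment method. Classifying ordered pairs $(S,T)$ of $r$-sets by $k=|S\cap T|$ and handling the regions $S\cap T$, $S\setminus T$, $T\setminus S$ and $V\setminus(S\cup T)$ separately — a vertex outside $S\cup T$ is dominated by both with probability $1-2q^{-r}+q^{-(2r-k)}$ — gives
\[\frac{\expec[X_r^2]}{\expec[X_r]^2}=\sum_{k=0}^{r}\prob\left[|S\cap T|=k\right]\left(1+\frac{q^{-2r}(q^k-1)}{(1-q^{-r})^2}\right)^{n-2r+k}.\]
The $k=0$ term is $1-o(1)$, so the task is to bound the tail $k\ge1$ by $o(1)$; then $\var[X_r]=o(\expec[X_r]^2)$ and Chebyshev give $X_r>0$ a.a.s., that is $D(G)\le\lfloor\myr\rfloor+2$.

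The main difficulty is this tail. For small $k$ the summand behaves like $\prob[|S\cap T|=k]\,\exp\!\big(\tfrac{r^2\ln^2 d}{n}(q^k-1)\big)$, and the $k=1$ term, of order $\frac{r^2}{n}\exp\!\big(\tfrac{r^2\ln^2 d}{n}\,p\big)$, already requires both $r^2/n\to0$ and $\tfrac{r^2\ln^2 d}{n}\,p\to0$; with $r=\Theta(\ln n/\ln q)=\Theta(\ln n/p)$ these are secured by the hypothesis $p\gg\frac{\ln^2 n}{\sqrt n}$, which is precisely where the lower bound on $p$ enters. The genuinely delicate range is large $k$: there $q^k$ grows up to $q^r\approx n/(r\ln d)$, so the bracket becomes comparable to $\binom{n}{r}$, while $\prob[|S\cap T|=k]$ is correspondingly tiny; I would bound the hypergeometric weight by $\binom{r}{k}(r/n)^k$ and show that the logarithm of the general summand is maximized only at $k=0$ over the whole range $0\le k\le r$, so that the sum is dominated by its first term. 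A final subtlety, sharpest when $p\to0$, is that $\Ln$ magnifies multiplicative errors, so the first- and second-moment thresholds must be located to additive accuracy $o(1)$ in $r$; the ``$+2$'' in the upper bound supplies the slack that absorbs the remaining ambiguity in $\lfloor\myr\rfloor$ and keeps the concentration window at width two.
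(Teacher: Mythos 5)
Your proposal follows essentially the same route as the paper: a first-moment bound at $\lfloor\myr\rfloor$ with $\myr$ pinned down by the threshold equation for $\expec[X_r]$ (the paper uses the cutoff $\expec(X_r)\ge 1/d$ rather than $1$, which is exactly the ``calibration'' you mention to force $\expec[X_{\lfloor\myr\rfloor}]\to 0$), followed by Chebyshev at $\lfloor\myr\rfloor+2$ with the second moment decomposed by intersection size, the $k=0$ term giving $(1+o(1))\expec[X_r]^2$ and the tail dominated by $k=1$ — and you correctly locate where $p\gg\ln^2 n/\sqrt n$ enters (via $r^2/n\to 0$ and $n(1-p)^{2r}\to 0$) and that the large-$k$ tail is the delicate part, which the paper handles by splitting into small/middle/large ranges of $k$ and concavity arguments. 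The plan is sound and matches the paper's proof in all essentials.
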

\noindent
Note that, when $p \rightarrow 0$ and hence $\ln q = p(1+o(1))$ then the $\myr$ from Theorem~\ref{MAIN} is of the order
$\frac{\ln d}{p} = \frac{n \ln d}{d}$.

The choice of the value of $\myr$ in Theorem~\ref{MAIN} as the
start of the concentration interval will be a very natural one:
$\myr$ will represent a particular critical dominating set size, such that
the expected number of dominating sets of size $\lfloor \myr \rfloor$
tends to $0$, whereas the expected number of dominating sets of size
$\lfloor \myr \rfloor +2$ tends to $\infty$ very fast. Observe that this 
puts Theorem~\ref{MAIN} in line with earlier strong concentration results,
mentioned above about the independence number and the chromatic number,
where the critical value is also around the first moment threshold.

For technical reasons the value of $\myr$ will be defined somewhat implicitly
in the range when $p\rightarrow 0$ and $d\rightarrow \infty$:
\begin{align} \label{rDefinition}
	\myr = \min \lt\{\, r\ | \ \expec(X_{r}) \geq \frac{1}{d} \rt\} -1 .
\end{align}
For $p \rightarrow 1$ (and $p< 1$), we will show that the theorem holds with the explicit formula
$\myr = \Ln \left( \frac{n \ln q}{\ln^2 n}\right) $.

Similarly to  \cite{wg2001}  we use standard first and second moment methods to prove
the two-point-concentration result of Theorem~\ref{MAIN}, however the
technical difficulties increase significantly, hence much of the improvement goes into 
the fine asymptotics.
A lower bound of $n^{-1/2} {\rm polylog}\, n$ on $p$ seems to be the boundary of
these calculations.

One can nevertheless show some, though not constant-length,
concentration of $D(\Gnp)$ for smaller $p$ as well.
\begin{prop}
\label{SomeConcentration}
For $p\rightarrow 0$ and $d\rightarrow \infty$ we have that
\[\prob \left( D\left( \Gnp \right) = n\frac{\ln d}{d} (1+o(1))\right) \rightarrow 1.\]
\end{prop}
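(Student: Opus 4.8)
The plan is to locate the domination number by a first-moment lower bound together with a direct two-phase upper bound. Since we only aim for concentration up to a multiplicative $(1+o(1))$ factor, we have a wide window to work in, which makes both halves comparatively soft. Throughout I write $X_r$ for the number of dominating sets of size $r$. For a fixed $r$-set $S$ and a vertex $v\notin S$, the events ``$v$ has a neighbour in $S$'' depend on disjoint edge sets, so each external vertex is dominated independently with probability $1-(1-p)^r=1-q^{-r}$; hence
\[
\expec(X_r)=\binom{n}{r}\bigl(1-q^{-r}\bigr)^{n-r}.
\]
I will use only the elementary estimates $\ln\binom{n}{r}\le r\ln(en/r)$ and $\ln(1-x)\le -x$, together with the one analytic input that $p\to 0$ forces $\ln q=p(1+o(1))$, so that $\frac{\ln q}{p}\to 1$. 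Write $\rho:=\frac{n\ln d}{d}=\frac{\ln d}{p}$ for the target value.

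For the lower bound I would fix an arbitrary constant $\eta\in(0,1)$ and set $r_0=\lfloor(1-\eta)\rho\rfloor$. Then $r_0\ln q=(1-\eta)(1+o(1))\ln d\le(1-\tfrac{\eta}{2})\ln d$ for large $n$, so $q^{-r_0}\ge d^{-(1-\eta/2)}$ and therefore $(n-r_0)q^{-r_0}\ge \tfrac12\, n\, d^{-(1-\eta/2)}=\tfrac12\, d^{\eta/2}/p$. As $r_0=o(n)$, also $\ln\binom{n}{r_0}\le r_0\ln(en/r_0)\le \ln^2 d/p$. Substituting into the formula,
\[
\ln\expec(X_{r_0})\le \ln\binom{n}{r_0}-(n-r_0)q^{-r_0}\le \frac{\ln^2 d}{p}-\frac{1}{2}\frac{d^{\eta/2}}{p}\longrightarrow-\infty,
\]
since $d^{\eta/2}\gg\ln^2 d$. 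As any graph with a dominating set of size at most $r_0$ has one of size exactly $r_0$ (pad with arbitrary vertices), $\prob(D(G)\le r_0)=\prob(X_{r_0}\ge 1)\le\expec(X_{r_0})\to0$, giving $D(G)>(1-\eta)\rho$ a.a.s.

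For the upper bound I would avoid the second moment entirely. Fix $S_1=[r_1]$ with $r_1=\lceil \ln d/\ln q\rceil=(1+o(1))\rho$, and let $U$ be the set of vertices outside $S_1$ with no neighbour in $S_1$. By the same independence, $\expec|U|=(n-r_1)q^{-r_1}\le n\,e^{-r_1\ln q}\le n/d=1/p$. Choosing any $\omega=\omega(n)\to\infty$ with $\omega=o(\ln d)$ (e.g.\ $\omega=\ln\ln d$, legitimate since $\ln d\to\infty$), Markov's inequality yields $\prob(|U|\ge \omega/p)\le 1/\omega\to0$. On the complementary event the set $S_1\cup U$ is dominating — every vertex either lies in $S_1$, is dominated by $S_1$, or lies in $U$ — and has size $r_1+|U|\le(1+o(1))\rho+\omega/p=(1+o(1))\rho$, the last equality because $\tfrac{\omega/p}{\rho}=\tfrac{\omega}{\ln d}\to0$. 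Hence $D(G)\le(1+o(1))\rho$ a.a.s.

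Combining the two parts, for every fixed $\eta>0$ we obtain $(1-\eta)\rho<D(G)<(1+o(1))\rho$ a.a.s., and a standard diagonal argument over a sequence $\eta=\eta_n\to0$ upgrades this to $D(G)=\rho(1+o(1))=n\frac{\ln d}{d}(1+o(1))$ a.a.s. The point requiring the most care is the uniformity of all $o(1)$ terms across the whole regime where $p\to0$ and $d\to\infty$: each error is ultimately controlled either by $\ln d\to\infty$ (which renders the second-phase cost $\omega/p$ negligible against $\rho$) or by $\ln q/p\to1$ (which transfers the clean threshold $r\ln q\approx\ln d$ to the stated value $n\ln d/d$). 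I expect the first-moment estimate that places $r_0$ to be the most calculation-heavy step, although the essentially $d^{\eta/2}$-type decay of $\expec(X_{r_0})$ leaves a generous margin.
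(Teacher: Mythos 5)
Your proposal is correct and follows the same two-step strategy as the paper's own proof: a first-moment bound for the lower bound and an alteration (two-phase greedy) argument for the upper bound. Your upper bound is essentially identical to the paper's, which fixes the set $[r]$ with $r = n\ln d/d$, bounds the expected number of undominated vertices by $n/d = 1/p$, and inflates by a factor $\ln\ln d$ via Markov --- exactly your $\omega/p$ correction with $\omega=\ln\ln d$. The only genuine difference is in the lower bound: the paper cites Lemma~\ref{expecSparse}, which pins down the precise threshold $\myr$ and requires the delicate cancellation $\frac{1}{\ln q}-\frac{1}{p}=-\frac12+o(1)$, whereas you evaluate the first moment at the cruder point $(1-\eta)\rho$ and win by the generous margin $d^{\eta/2}\gg\ln^2 d$. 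Since the proposition only claims a multiplicative $(1+o(1))$, your version is fully self-contained and trades the sharp threshold computation for an elementary estimate at no loss; the paper's route is only ``shorter'' because Lemma~\ref{expecSparse} is needed elsewhere for the two-point concentration and can be reused here for free.
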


If $p$ tends to 0 faster than $n^{-1/2}\ln n$, then
one can decrease the length of the concentration interval using Talagrand's Inequality.
\begin{thm}
\label{MAINTAL}
Let $m = m(n)$ be the median of $D(G)$ where $G\sim \Gnp$ and let $t= t(n) \gg \sqrt{n}$.
Then $\prob (|D(G)-m| > t) \rightarrow 0$.
\end{thm}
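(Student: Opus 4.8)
The plan is to deduce Theorem~\ref{MAINTAL} from Talagrand's inequality in its standard \emph{certifiable} form. I view $G\sim\Gnp$ as determined by the $\binom{n}{2}$ independent Bernoulli trials indexing the potential edges, and regard $D(G)$ as a function of these trials. The two structural ingredients Talagrand's inequality demands are a Lipschitz bound and a certificate of bounded size. The only genuinely delicate point is that $D$ is a \emph{minimum}, so the event one can certify with few trials is $\{D(G)\le s\}$ (exhibit a small dominating set), not $\{D(G)\ge s\}$. For this reason I would apply the inequality to $X:=n-D(G)$, whose large values correspond to small dominating sets, and then read off both tails of $D$ from the resulting one-sided estimate.

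First I would check that $D$ is $1$-Lipschitz with respect to a single trial. Adding an edge $uv$ cannot destroy any dominating set, so $D(G+uv)\le D(G)$; conversely, if $S$ dominates $G+uv$ then $S\cup\{v\}$ dominates $G$, whence $D(G)\le D(G+uv)+1$. Thus flipping one edge changes $D$, and hence $X$, by at most $1$. Next I would verify that $X$ is $f$-certifiable with $f\equiv n$: if $X\ge s$, i.e.\ $D(G)\le n-s$, fix a dominating set $S$ with $|S|\le n-s$ and, for each $v\notin S$, one present edge from $v$ into $S$. This is a set of at most $n$ trials, and keeping these trials unchanged keeps $S$ dominating no matter how the remaining edges turn out, so $X\ge s$ persists.

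With $c=1$ and $f(b)\le n$, Talagrand's inequality gives $\prob(X\le b-\lambda\sqrt{n})\,\prob(X\ge b)\le e^{-\lambda^2/4}$ for every $b$ and every $\lambda\ge 0$. Taking $b$ equal to the median $n-m$ of $X$ bounds the lower tail of $X$, hence the upper tail $\prob(D\ge m+\lambda\sqrt{n})$; taking $b=n-m+\lambda\sqrt{n}$ bounds the upper tail of $X$, hence the lower tail $\prob(D\le m-\lambda\sqrt{n})$. Together these yield $\prob(|D(G)-m|\ge \lambda\sqrt{n})\le 4e^{-\lambda^2/4}$. Finally, writing the prescribed deviation as $t=\lambda\sqrt{n}$ with $\lambda=t/\sqrt{n}\to\infty$ (since $t\gg\sqrt{n}$) makes the right-hand side $4e^{-t^2/(4n)}\to 0$, which is exactly the assertion. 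I expect the main obstacle to be conceptual rather than computational: correctly locating the cheaply certifiable event and passing to $n-D$, so that both tails of $D$ fall out of a single one-sided Talagrand estimate, while the Lipschitz and certifiability checks themselves are routine.
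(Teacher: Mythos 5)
Your proposal is correct and follows essentially the same route as the paper: apply Talagrand's inequality to $X=n-D(G)$ over the $\binom{n}{2}$ edge trials, using the Lipschitz property and a certificate consisting of one edge from each vertex outside a small dominating set, then read off both tails of $D$ around its median (the paper uses the slightly sharper $f(s)=s$ rather than $f\equiv n$, but this makes no difference to the final bound $e^{-t^2/(4n)}$). The only nitpick is in the Lipschitz check: if $S$ dominates $G+uv$ but not $G$, the vertex that loses domination could be $u$ rather than $v$, so one should add whichever of the two endpoints fails; this does not affect the bound.
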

\noindent
By Proposition~\ref{SomeConcentration} $m \approx n\frac{\ln d}{d}$
so Theorem~\ref{MAINTAL} constitutes an improved concentration result whenever 
$p \ll \frac{\ln n}{\sqrt{n}}$.
However, for larger $p$, the length $t \gg \sqrt{n}$  of the interval of concentration
is of larger order than the median.

In Theorem~\ref{MAIN} we show that for $p\gg\ln^2n/\sqrt{n}$ the domination number 
of {\Gnp} is concentrated
on two integers $\lfloor \myr \rfloor +1$ and $\lfloor \myr \rfloor +2$ a.a.s., where
$\myr$ is the size when the expected number of dominating sets changes
from tending to zero to tending to infinity.
It is natural to ask whether the lower bound, or at least some
sort of lower bound, on $p$ in Theorem~\ref{MAIN} is justified.
It is not hard to see that the theorem cannot be extended to hold for arbitrary $p$.
For $p\ll n^{-4/3}$, for example, we have that {\Gnp} consists of a
collection of vertex-disjoint stars (with at most two edges) a.a.s.,
hence its domination number equals $n$ minus its number of edges. 
Consequently $D({\Gnp})$ is not concentrated on any interval of length 
$ o\left( \sqrt{{n\choose 2}p}\right) $ a.a.s.
Our last theorem extends this to every $p=o(1/n)$, and provides various other
non-concentration results for larger $p$.
In particular we show that in a certain range of $p$
the domination number of \Gnp\ is not concentrated around the critical first moment threshold $\myr$.

\begin{thm}\label{newNonConc}
Let $G\sim \Gnp$. 
Let $\myr (n, p(n)) =\myr = \min\{ r : \expec(X_{r}) \geq 1\} -1 $.

\begin{itemize}
\item[$(a)$] For every $c,K >0$, if $p\leq \frac{c}{n}$ and $I\subseteq [n]$, 
$|I| < Kn\sqrt{p}$, then $\prob(D(G)\in I)\not\rightarrow 1$.
\item[$(b)$] For all $c>0$ there is $\eps >0$ such that for $\frac{c}{n}\leq p \ll 1$ 
	it holds that $D(G) > \myr + \eps n\exp[-2np]$ a.a.s.  (where $\eps$ can be chosen $1$ for any $c>1$.)
\item[$(c)$] 
For every $c>0$ and for every $p=p(n)$ with
$ \frac{1}{n}\ll p \ll 1$, $\prob(D(G) \leq \myr+c\frac{\myr}{n\sqrt{p}}) \not\rightarrow 1$.
\end{itemize}
\end{thm}
We want to remark that the value of $\myr$ in the theorem might differ by one from the definition in (\ref{rDefinition}) for the range $d\rightarrow \infty$ (this will be apparent from Lemma~\ref{expecJump}),
but this is not a concern here due to the asymptotic nature of the statement. 

Observe that part (a) implies in particular that the length of the concentration interval in Theorem~\ref{MAINTAL} is best possible in general. 
If $p=1/n$ for example, then the domination number $D(\Gnp)$ is not concentrated on any interval of length $O(\sqrt{n})$ a.a.s., but according
to Theorem~\ref{MAINTAL} it is concentrated on some interval of length
$\sqrt{n}f(n)$ for any function $f(n)$ tending to infinity.  
An analogous  non-concentration statement about the chromatic number 
is mentioned in the concluding remarks of \cite{ak1997}: for $p$ as high as, 
say, at least  $1- 1/10n$, there is no interval of length smaller than $\Omega(\sqrt{n})$ 
containing the chromatic number a.a.s.

Note that part (b) implies in particular that for $\frac{1}{n}\leq p \leq \frac{\ln n}{5 n}$ 
the probability that the domination number falls in some interval 
of length $ne^{-2np} \geq n^{0.6}$ around the first moment threshold $\myr$ tends to $0$.
Now since by Theorem \ref{MAINTAL} the domination number  $D(\Gnp)$ {\em is} concentrated a.a.s.~on an interval of length $n^{0.51}$ around the median $m$, this  interval of concentration has to be 
somewhat far apart from $\myr$ and in particular the distance between the median $m$ and the first moment threshold 
is at least $ne^{-2np}- n^{0.51} = \Omega (ne^{-2np})$.

For the range $p \gg \frac{\ln n}{n}$ the statement of part (b) becomes trivial, but 
part (c) is meaningful also to probabilities in this range and extends for example 
the non-concentration on any constant-length interval around the first moment threshold. 
That is, for probabilities $p \ll (\ln n / n)^{2/3}$, the domination number $D(\Gnp)$
is not concentrated  a.a.s.~on any constant length interval around $\myr$, since
$\frac{\myr}{n\sqrt{p}} \sim \frac{\ln d}{np^{3/2}} \gg 1$.

\subsection*{Notation and structure of the paper.} \ \\
In Section~\ref{SEC:exp}, we examine
the expected number of dominating sets of a particular size $r$ in $G\sim \Gnp$.
In Section~\ref{sec:2ptConc} we prove Theorem~\ref{MAIN}.
We split the proof according to whether $p \rightarrow 0$ or $p \rightarrow 1$.
Section~\ref{SEC:furtherStuff} contains the proofs of Proposition~\ref{SomeConcentration} and Theorem~\ref{MAINTAL}.
In Section~\ref{SEC:nonConc} we prove Theorem~\ref{newNonConc}.
Finally, in Section~\ref{conclusion} we discuss some possible extensions of these results.

%
%
\section{Expectation}
\label{SEC:exp}
In this section we study the expected number of dominating sets of size $r$ in the random graph
$\Gnp$ when $p \rightarrow 0$ and when $p \rightarrow 1$.
In particular, we are interested in the value of $r$ when the expectation
first exceeds $1$ and how fast it grows around this point.
Let $G \sim \Gnp$.
We denote by $X_r$ the
number of dominating sets of size $r$ in $G$.
For any fixed $r$-subset $S$ of $[n]$ and vertex $v\in [n]\setminus S$
the probability that $v$ is not dominated by $S$ in $G$ is $(1-p)^{r}$.
These events are mutually independent for a fixed $S$, hence
the probability that $S$ is dominating is $\left(1-(1-p)^r\right)^{n-r}$
and for the expectation of $X_r$ we have
 \[ \expec (X_r)={n\choose r}\left(1-(1-p)^r\right)^{n-r}.\]
It turns out that $\expec (X_r)$ first exceeds 1 when $r$ is in the range of
$\Ln \left(\frac{n\ln q}{\ln^2 d}\, (1+o(1))\right)$.
Recall that we use the notation $d = np$ and $q=\frac{1}{1-p}$.
We now split the analysis according to whether $p \rightarrow 0$ or $p \rightarrow 1$.
\subsection{The sparse case}
Note that when $p \rightarrow 0$, $\ln q=p(1+o(1))$.
We thus have the identity
$\Ln \left(\frac{n\ln q}{\ln^2 d}\, (1+o(1))\right) = \Ln \left(\frac{d}{\ln^2 d} (1+o(1)) \right)$.
The following two small calculations will come in handy.
\begin{obs}
\label{dict}
Let $p=p(n) \rightarrow 0$, $d=pn \rightarrow \infty$, and
$r = \Ln \left(\frac{d}{\ln^2 d} (1+o(1)) \right)$.
Then the following identities hold.
\begin{enumerate}[(i)]
	\item \label{dicti} $r =\frac{\ln d - 2 \ln\ln d +o(1)}{\ln q}= \frac{\ln d }{p} (1+o(1)) = \frac{n\, \ln d }{d} (1+o(1))$.
	\item \label{dictiii} $(1-p)^r = \frac{\ln^2 d}{d}\, (1+o(1)) \rightarrow 0$.
\end{enumerate}
\end{obs}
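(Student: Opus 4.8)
The plan is to treat both identities as direct consequences of expanding the logarithm $\ln q$ and the argument of the $\Ln$ defining $r$. First I would record that, since $p\to 0$, the Taylor expansion $\ln q = -\ln(1-p) = p + p^2/2 + \cdots = p(1+o(1))$ holds; this converts every statement about $\ln q$ into a statement about $p$, and, via $d=np$, into a statement about $d/n$. All the estimates below are then pure bookkeeping in these expansions.

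For part (\ref{dicti}), I would write $r = \Ln\left(\frac{d}{\ln^2 d}(1+o(1))\right) = \frac{\ln\left(\frac{d}{\ln^2 d}(1+o(1))\right)}{\ln q}$ and expand the numerator as $\ln d - 2\ln\ln d + \ln(1+o(1)) = \ln d - 2\ln\ln d + o(1)$, which yields the first equality immediately. To pass to the second equality I would observe that, since $d\to\infty$, we have $\ln d\to\infty$ and $\ln\ln d = o(\ln d)$, so the numerator equals $\ln d\,(1+o(1))$; combined with $\ln q = p(1+o(1))$ in the denominator this gives $r = \frac{\ln d}{p}(1+o(1))$. The final equality is then just the substitution $1/p = n/d$.

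For part (\ref{dictiii}), I would use that $(1-p)^r = q^{-r}$ together with the fact that, by the very definition of $r$ as $\Ln$ of the displayed quantity, $q^{r} = \frac{d}{\ln^2 d}(1+o(1))$. Inverting and using $(1+o(1))^{-1} = 1+o(1)$ gives $(1-p)^r = \frac{\ln^2 d}{d}(1+o(1))$, and that this tends to $0$ follows from $\ln^2 d = o(d)$ as $d\to\infty$.

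There is no genuine obstacle here; the statement is an exercise in asymptotic expansions. The only points requiring a little care are (a) justifying that the $-2\ln\ln d$ correction may be absorbed into a multiplicative $(1+o(1))$ factor, which relies crucially on $d\to\infty$ (hence $\ln d\to\infty$) rather than merely on $p\to 0$, and (b) keeping the various $(1+o(1))$ factors from different sources consistent, noting in particular that products, quotients, and reciprocals of $(1+o(1))$ terms are again of the form $(1+o(1))$.
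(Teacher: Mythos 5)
Your proof is correct, and it matches what the paper intends: the authors state Observation~\ref{dict} without proof precisely because it reduces to the routine asymptotic bookkeeping you carry out (expanding $\ln q = p(1+o(1))$, $\ln\bigl(\tfrac{d}{\ln^2 d}(1+o(1))\bigr) = \ln d - 2\ln\ln d + o(1)$, and inverting $q^r$ for part (ii)). Your two cautionary remarks — that absorbing $-2\ln\ln d$ into a $(1+o(1))$ factor needs $d\to\infty$, and that the $(1+o(1))$ factors compose cleanly — are exactly the right points to flag.
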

In the next lemma we establish  that when $r$ is in the range of our interest, then
the expected number of dominating sets of size $r+1$
is much more than the expected number of dominating sets of size $r$.

\begin{lem}
\label{expecJump}
Let $p=p(n) \rightarrow 0$, $d=pn \rightarrow \infty$.
For $\ell < n/2$, $\expec (X_l) < \expec (X_{l+1})$.
Furthermore, for every $r=r(n)=  \Ln \left(\frac{d}{\ln^2 d} \right) +o(1/p)$
and $\alpha = o(1/p)$ we have
\[\frac{\expec (X_{r+\alpha})}{\expec (X_r)} =\exp \Big(  (1+o(1))\, \alpha \, \ln^{2} d\Big).\]
\end{lem}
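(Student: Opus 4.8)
The plan is to work throughout from the exact formula $\expec(X_r)=\binom{n}{r}\bigl(1-(1-p)^r\bigr)^{n-r}$ established above, treating both assertions as ratio/difference estimates. For the monotonicity statement I would examine
\[
\frac{\expec(X_{\ell+1})}{\expec(X_\ell)}=\frac{n-\ell}{\ell+1}\cdot\frac{\bigl(1-(1-p)^{\ell+1}\bigr)^{n-\ell-1}}{\bigl(1-(1-p)^{\ell}\bigr)^{n-\ell}},
\]
and show each factor is positive and the product exceeds $1$. The binomial factor $\tfrac{n-\ell}{\ell+1}$ exceeds $1$ whenever $\ell<(n-1)/2$. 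For the second factor, the point is that the probability $P_\ell:=\bigl(1-(1-p)^\ell\bigr)^{n-\ell}$ that a fixed $\ell$-set dominates is \emph{strictly increasing} in $\ell$: enlarging the set both raises each vertex's chance of being dominated and lowers the number of vertices that must be dominated. I would make this precise by differentiating $\ln P_\ell$ in the continuous variable $\ell$ and checking $\tfrac{d}{d\ell}\ln P_\ell=-\ln\!\bigl(1-(1-p)^\ell\bigr)+(n-\ell)\tfrac{(1-p)^\ell\ln q}{1-(1-p)^\ell}>0$, both summands being positive. The narrow gap near $n/2$ where the binomial factor dips to $1$ is absorbed by this strict positivity.

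For the quantitative jump I would pass to logarithms and write $\ln\expec(X_{r+\alpha})-\ln\expec(X_r)=B+D$, the binomial part $B=\ln\binom{n}{r+\alpha}-\ln\binom{n}{r}$ and the domination part $D$. Assuming $\alpha$ a positive integer (the case $\alpha\le 0$ being symmetric), I expand the binomial part as a telescoping product, $\tfrac{\binom{n}{r+\alpha}}{\binom{n}{r}}=\prod_{i=1}^{\alpha}\tfrac{n-r-i+1}{r+i}$. By Observation~\ref{dict}\eqref{dicti}, $r=\tfrac{\ln d}{p}(1+o(1))=o(n)$ and $\alpha=o(1/p)=o(r/\ln d)$, so each factor equals $\tfrac n r(1+o(1))=\tfrac{d}{\ln d}(1+o(1))$ uniformly; summing the logarithms gives $B=(1+o(1))\,\alpha\ln\tfrac n r=(1+o(1))\,\alpha\ln d$, which will turn out to be of lower order than the final answer.

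The heart is the domination part. Writing $u=(1-p)^r$, which by Observation~\ref{dict}\eqref{dictiii} satisfies $u=\tfrac{\ln^2 d}{d}(1+o(1))\to0$, I would reorganize
\[
D=(n-r)\ln\frac{1-u(1-p)^\alpha}{1-u}-\alpha\ln\!\bigl(1-u(1-p)^\alpha\bigr)
\]
so that the cancellation is explicit. The last term is $O(\alpha u)=o(\alpha\ln^2 d)$ and is negligible. In the first term, $\tfrac{1-u(1-p)^\alpha}{1-u}=1+\tfrac{u(1-(1-p)^\alpha)}{1-u}$; since $\alpha p=o(1)$ we have $1-(1-p)^\alpha=(1+o(1))\alpha p$, and as $u\to0$ the argument of the logarithm tends to $1$, so $\ln(1+x)=(1+o(1))x$ applies. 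This yields a first term equal to $(1+o(1))(n-r)\,u\,\alpha p=(1+o(1))\,\alpha\ln^2 d$, using $n-r\sim n$ and $nup=du=(1+o(1))\ln^2 d$. Hence $D=(1+o(1))\alpha\ln^2 d$, and since $B=(1+o(1))\alpha\ln d=o(\alpha\ln^2 d)$, the two combine to the claimed $\exp\bigl((1+o(1))\alpha\ln^2 d\bigr)$.

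I expect the main obstacle to be exactly this last estimate: $D$ is a difference of two quantities each of magnitude $\sim nu=\ln^2 d/p$, which may be large, whereas their difference is only of order $\alpha\ln^2 d$. The reorganization into $(n-r)\ln\tfrac{1-u(1-p)^\alpha}{1-u}$ is designed to build in the cancellation so that the surviving logarithm has an argument tending to $1$, making a single first-order expansion legitimate with $o(1)$ relative error; the quadratic remainder is $(n-r)\,O(x^2)=o(\alpha\ln^2 d)$ precisely because $x\to0$. The remaining care is to verify that all the $(1+o(1))$ factors are uniform over the admissible window $r=\Ln\bigl(\tfrac{d}{\ln^2 d}\bigr)+o(1/p)$ and over $\alpha=o(1/p)$; in particular that shifting $r$ by $o(1/p)$ alters $u=(1-p)^r$ only by a factor $(1-p)^{o(1/p)}=e^{o(1)}=1+o(1)$, so that the value of $u$ used in the estimate is insensitive to the exact representative of $r$.
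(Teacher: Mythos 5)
Your proof is correct, and it reaches the same leading term as the paper but via a different decomposition. The paper handles both the monotonicity and the quantitative jump by telescoping the one-step ratios $\expec(X_{\ell+1})/\expec(X_\ell)$, bounding each consecutive ratio from below by $\exp\lt[(n-\ell)(1-p)^\ell p\rt]$ and from above by $\lt(\tfrac{2n}{r}\rt)\exp\lt[(1+o(1))(n-\ell)\tfrac{(1-p)^\ell p}{1-(1-p)^\ell}\rt]$, and then summing the geometric series $\sum_i (1-p)^{r+i}$ to get $n(1-p)^r\tfrac{1-(1-p)^\alpha}{p}\cdot p=(1+o(1))\alpha\ln^2 d$. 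You instead take the logarithm of the full ratio in one shot, isolate the binomial contribution $B=(1+o(1))\alpha\ln d$ (which the paper likewise shows is absorbed, via $\ln(2n/r)=o(\ln^2 d)$), and reorganize the domination part as $(n-r)\ln\tfrac{1-u(1-p)^\alpha}{1-u}-\alpha\ln\bigl(1-u(1-p)^\alpha\bigr)$ with $u=(1-p)^r$, so that the cancellation between two quantities of size $\sim nu$ is built in before any expansion; your leading term $(n-r)u(1-(1-p)^\alpha)$ is exactly the paper's geometric sum. Your route gives upper and lower bounds simultaneously and makes the cancellation more transparent; the paper's telescoping has the advantage of yielding the strict monotonicity $\expec(X_\ell)<\expec(X_{\ell+1})$ for $\ell<n/2$ as a free by-product of the same computation, whereas you prove that separately by differentiating $\ln\bigl(1-(1-p)^\ell\bigr)^{n-\ell}$ in a continuous variable (also valid, and arguably cleaner for the boundary cases near $\ell\approx n/2$ where the factor $\tfrac{n-\ell}{\ell+1}$ is only $\geq 1$). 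All your error estimates check out: the quadratic remainder $(n-r)O(x^2)=o\bigl((n-r)x\bigr)$ because $x\to 0$, and the uniformity over $r=\Ln\lt(\tfrac{d}{\ln^2 d}\rt)+o(1/p)$ follows since $(1-p)^{o(1/p)}=1+o(1)$, exactly as you note.
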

\begin{remark}
Note that $r =  \Ln \left(\frac{d}{\ln^2 d} \right) +o(1/p)$ if and only if
$r = \Ln \left(\frac{d}{\ln^2 d} (1+o(1)) \right)$. Hence, we can use
Observation~\ref{dict}.
\end{remark}
\begin{proof}
First, let us note that, since $\alpha = o(1/p)$,
\begin{equation}\label{shorty}
(1-p)^{\alpha} = 1- p\alpha (1+o(1)).
\end{equation}
For the lower bound, note that for every $\ell$, $0 \leq \ell < n/2$,
\begin{align*}
\frac{\expec (X_{\ell+1})}{\expec (X_{\ell})}
	& = \frac{{n\choose \ell+1}(1-(1-p)^{\ell+1})^{n-\ell-1}}{{n\choose \ell} (1- (1-p)^\ell)^{n-\ell}}\\
	& = \frac{n-\ell}{\ell+1}  \exp  \left[ (n-\ell) \sum_{k=1}^\infty
		\frac{(1-p)^{\ell k}}{k} \left( 1- (1-p)^k \left(1- \frac{1}{n-	\ell}\right)\right)\right]\\
	& >
	\exp  \left[ (n-\ell) (1-p)^\ell  \left( 1- (1-p)\left(1- \frac{1}{n-\ell}\right)\right)\right]\\
	& >
	\exp  \left[(n-\ell)(1-p)^\ell p  \right].
\end{align*}
In particular, $\expec (X_l) < \expec (X_{l+1})$ for all $\ell < n/2$.
Using the above, we estimate the quotient from the lemma as follows:
\begin{align*}
\frac{\expec (X_{r+\alpha})}{\expec (X_{r})} & = \frac{\expec (X_{r+\alpha})}{\expec (X_{r+\alpha -1})} \cdot \frac{\expec (X_{r+\alpha-1})}{\expec (X_{r+\alpha -2})} \cdot \cdots \cdot \frac{\expec (X_{r+1})}{\expec (X_{r})}\\
& \geq \exp  \left[\sum_{i=0}^{\alpha -1} (n-(r+i)) (1-p)^{r+i} p\right]\\
& \geq \exp  \left[(n-r-\alpha)(1-p)^r \frac{1-(1-p)^\alpha}{p}p\right]\\
& = \exp \left[  (1+o(1)) \alpha\, \ln^{2} d\right],
\end{align*}
where in the last equality we used that
$n- r -\alpha = n(1+o(1))$
by Observation~\ref{dict}$(i)$,
as well as \eqref{shorty} and the formula of Observation~\ref{dict}$(ii)$.

For the upper bound note that for every $\ell\geq 0$, we have
\begin{align*}
\frac{\expec (X_{\ell+1})}{\expec (X_{\ell})}
	& = \frac{{n\choose \ell+1}(1-(1-p)^{\ell+1})^{n-\ell-1}}{{n\choose \ell} (1- (1-p)^{\ell})^{n-\ell}}
	 = \frac{n-\ell}{\ell+1} \left( 1+ \frac{p(1-p)^{\ell}}{1-(1-p)^{\ell}} \right)^{n-\ell} \cdot \frac{1}{1-(1-p)^{\ell+1}}.
\end{align*}
To estimate from above, we use again the telescopic product, and that by Observation~\ref{dict} $(ii)$
the last factor of the above expression is less than $2$ for large $n$ and $\ell = r+o(1/p)$. Thus we obtain
\begin{align*}
\frac{\expec (X_{r+\alpha})}{\expec (X_{r})}
	& \leq \left(\frac{n}{r}\right)^\alpha \cdot
	\exp  \left[ \sum_{i=0}^{\alpha -1} (n-(r+i)) \frac{(1-p)^{r+i} p}{1-(1-p)^{r+i}} \right] \cdot 2^\alpha \\
	& \leq \left(\frac{2n}{r}\right)^\alpha \cdot \exp  \left[ \frac{n(1-p)^r p}{1-(1-p)^{r}} \cdot \sum_{i=0}^{\alpha -1} (1-p)^i \right]\\
	& = \left(\frac{2n}{r}\right)^\alpha  \cdot \exp  \left[ (1+o(1)) n p  (1-p)^r\frac{1-(1-p)^\alpha}{p} \right]\\
	& \leq \exp \Big[ \alpha \left(\ln (2n/r)  +  (1+o(1))n  p (1-p)^r\right) \Big] \\
	& =\exp \lt[  (1+o(1)) \alpha\, \ln^{2} d\rt] ,
\end{align*}
where in the last inequality we again use~\eqref{shorty}, and the last equality holds since by Observation~\ref{dict}~$(i)$
\[ \ln ( 2n/r) = \ln \left( (2+o(1))d/ \ln d\right) =  o\left(\ln^2 d\right).\]
\end{proof}
By the previous lemma, in our range of interest the expectation $\expec(X_r)$ is strictly increasing in $r$ and
grows by a factor of  $\exp \left[ (1+o(1))\ln^2 d  \right]$ with each increase of $r$ by $1$.
Recall that our critical dominating set size is
\[\myr = \min \lt\{\, r\ | \ \expec(X_{r}) \geq \exp \left[- \ln d\right] \rt\} -1 .
\]
\begin{lem}
\label{expecSparse}
Let $p\rightarrow 0$ and $d=np \rightarrow \infty$.
Then $\myr$ is of the form
$\myr = \Ln \left(\frac{d}{\ln^2 d} \, (1+o(1)) \right)$.
Furthermore
\begin{enumerate}[(i)]
\item $\expec (X_{\myr}) \rightarrow 0$, and
\item $\expec (X_{\myr +2}) \geq \exp \lt[(1+o(1))\ln^2 d \rt] \rightarrow \infty$.
\end{enumerate}
\end{lem}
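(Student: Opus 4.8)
Granting the asymptotic form of $\myr$ derived below, assertions (i) and (ii) follow quickly from the definition of $\myr$ together with Lemma~\ref{expecJump}, so I would dispose of them first. Monotonicity of $\expec(X_r)$ for $r<n/2$ (the first part of Lemma~\ref{expecJump}) shows that $\myr$ is well defined and satisfies $\expec(X_{\myr})<\exp(-\ln d)\le \expec(X_{\myr+1})$. Since $d\to\infty$, the left inequality already gives $\expec(X_{\myr})<1/d\to 0$, which is (i). For (ii) I would apply the ratio estimate of Lemma~\ref{expecJump} with $\alpha=1$ at the point $\myr+1$, which lies in the admissible range $\Ln(\frac{d}{\ln^2 d})+o(1/p)$ once the form of $\myr$ is known, obtaining
\[\expec(X_{\myr+2})\ \ge\ \frac{1}{d}\exp\lt[(1+o(1))\ln^2 d\rt]\ =\ \exp\lt[(1+o(1))\ln^2 d\rt],\]
because $\ln d=o(\ln^2 d)$.

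The real content is the asymptotic form of $\myr$. Writing $f(r)=\ln\expec(X_r)=\ln\binom{n}{r}+(n-r)\ln\lt(1-(1-p)^r\rt)$, I would expand both summands for $r$ near the target. Stirling's formula gives $\ln\binom{n}{r}=r\ln\frac{n}{r}+r-\frac{r^2}{2n}+O(r^3/n^2)+O(\ln n)$, while $\ln(1-(1-p)^r)=-(1-p)^r(1+O((1-p)^r))$ turns the second summand into $-(n-r)(1-p)^r(1+o(1))=-n(1-p)^r(1+o(1))$, using $r=o(n)$. The point is that, on the scale $\frac{n\ln^2 d}{d}=\frac{\ln^2 d}{p}$, both $r\ln\frac{n}{r}$ and $n(1-p)^r$ equal $\frac{n\ln^2 d}{d}(1+o(1))$ by Observation~\ref{dict}, so they cancel to leading order, and the sign of $f$ is decided by the next term.

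To locate the crossing precisely I would substitute the ansatz $r=\frac{\ln d-2\ln\ln d+\xi}{\ln q}$, so that $(1-p)^r=\frac{\ln^2 d}{d}e^{-\xi}$ and hence $n(1-p)^r=\frac{\ln^2 d}{p}e^{-\xi}(1+o(1))$. Writing $L=\ln d$ and $M=\ln\ln d$ and collecting the surviving terms, I expect
\[f(r)\ =\ \frac{1}{\ln q}\,B(\xi)\,(1+o(1)),\qquad B(\xi)=(L-2M+\xi)(L-M+1)-L^2e^{-\xi},\]
where the neglected contributions (the $-\frac{r^2}{2n}$ term, the $O(\ln n)$ Stirling error, and the relative $O(p)$ in $\ln q/p$) each move the eventual value of $\myr\ln q$ only by $o(1)$, as one checks by dividing by $f'\approx \ln^2 d$ and multiplying by $\ln q$. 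Since $B(0)=-3LM(1+o(1))<0$ and $B$ is increasing, its unique zero sits at $\xi^\ast=\frac{3M}{L}(1+o(1))\to 0$. Fixing any $\omega=\omega(n)\to 0$ with $\omega\gg \xi^\ast$ (for instance $\omega=1/\sqrt{\ln d}$), the bracket satisfies $B(\omega)\approx\omega L^2\to+\infty$ and $B(-\omega)\to-\infty$; multiplying by $1/\ln q=(1/p)(1+o(1))$ this forces $\expec(X_{r})>1/d$ at $r=\lceil\frac{L-2M+\omega}{\ln q}\rceil$ and $\expec(X_{r})<1/d$ at $r=\lfloor\frac{L-2M-\omega}{\ln q}\rfloor$. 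By monotonicity these two integers sandwich $\myr$, so $\myr\ln q=\ln d-2\ln\ln d+o(1)$, i.e. $\myr=\Ln\lt(\frac{d}{\ln^2 d}(1+o(1))\rt)$, as claimed.

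The main obstacle is exactly this cancellation: because $\ln\binom{n}{r}$ and $n(1-p)^r$ agree to first order on the large scale $\frac{\ln^2 d}{p}$, I cannot read off the crossing from leading asymptotics and must control the difference down to the $\frac{n\ln d\ln\ln d}{d}$-term (the $-3LM$ in $B(\xi)$). What makes this tractable rather than merely delicate is that $\ln q=p(1+o(1))\to 0$ is tiny: a term of size $T$ in $f$ displaces $\myr\ln q$ by only $\frac{Tp}{\ln^2 d}(1+o(1))$, so every contribution of order $o\!\lt(\frac{n\ln^2 d}{d}\rt)$ is automatically harmless and may be discarded. Verifying this bookkeeping carefully — in particular that the $-\frac{r^2}{2n}$ term, which can exceed $\ln^2 d$ when $pd\to 0$, still only shifts $\myr\ln q$ by $O(1/d)=o(1)$ — is the one place where I would be most careful.
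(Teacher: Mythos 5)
Your proof is correct and follows essentially the same route as the paper: parts (i) and (ii) are handled identically (minimality in the definition of $\myr$, respectively Lemma~\ref{expecJump} with $\alpha=1$ applied at $\myr+1$ against the threshold $1/d$), and your crossing-point analysis of $B(\xi)$ is the paper's computation in a different packaging --- the paper simply evaluates $\expec(X_r)$ at the two test values $r=\lfloor\Ln(d/\ln^2 d)\rfloor$ and $r=\lceil\Ln(\frac{d}{\ln^2 d}(1+p+\frac{1}{\ln\ln d}))\rceil$, where the factor $1+p$ absorbs the $\frac{1}{\ln q}-\frac{1}{p}=-\frac12+o(1)$ discrepancy and the $\frac{1}{\ln\ln d}$ plays exactly the role of your $\omega\gg 3M/L$. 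The same cancellation of the leading $\frac{\ln^2 d}{p}$ terms and the same secondary term $-\frac{3\ln d\,\ln\ln d}{\ln q}$ govern both arguments.
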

\begin{proof}
First let $r =  \left\lfloor \Ln \left(\frac{d}{\ln^2 d} \right) \right\rfloor$.
Then
Observation~\ref{dict}
applies, so that
\begin{align*}
\expec (X_{r})
	&= \binom{n}{r} (1-(1-p)^r)^{n-r} \\
	&\leq \exp \Big[ r\, \ln \left(\frac{ne}{r}\right) - (n-r) (1-p)^r  \Big]\\
	&\leq
	\exp 	\Bigg[\left(\frac{\ln d}{\ln q} - \frac{2\, \ln \ln d}{\ln q}(1+o(1)) \right)
		\cdot \ln \left( \frac{de}{\ln d}(1+o(1)) \right)\\
	&\qquad \qquad \qquad - \frac{n\ln^2 d}{d} + \frac{\ln^3 d}{dp}(1+o(1))
\Bigg]\\
	&= \exp \Bigg[ \left( \frac{1}{\ln q} - \frac{1}{p}
\right) \, \ln^2 d
	-\frac{3 \, \ln d\, \ln \ln d}{\ln q}(1+o(1)) \Bigg],
\end{align*}
where the second inequality follows from Observation~\ref{dict},
and in the last equality we use the fact that $\frac{\ln^3 d}{dp}=o\lt(\frac{\ln d\ln\ln d}{\ln q}\rt)$.
Now,
\[ \frac{1}{\ln q} - \frac{1}{p}
= \frac{1}{p+p^2/2+\Ocal\lt(p^3\rt)}- \frac{1}{p} = -0.5 + o(1).\]
Therefore,
$\expec(X_{r}) \leq \exp \Big[ (-0.5+o(1)) \, \ln^2 d \Big] < \exp [- \ln d ]$
for large $n$, so
$\myr \geq \left\lfloor \Ln \left(\frac{d}{\ln^2 d}\right)
\right\rfloor$ by definition.

For the upper bound let us redefine
$r =  \left\lceil \Ln \left(\frac{d}{\ln^2 d} \lt(1+p +\frac{1}{\ln \ln d}\rt) \right) \right\rceil$.
Then $r = \Ln \left(\frac{d}{\ln^2 d} \, (1+o(1)) \right)$, so
Observation~\ref{dict}
applies. Thus,
for $n$ being sufficiently large,
\begin{align*}
\expec (X_r) &= \binom{n}{r} \left(1- (1-p)^{r} \right)^{n-r} \\
	&\geq \exp \Big[ r (\ln (n/r ) -(n-r ) (1-p)^{r} -(n-r ) (1-p)^{2r} \Big] \\
	&\geq \exp \Bigg[\left(\frac{\ln d}{\ln q} - \frac{2\, \ln \ln d}{\ln q}(1+o(1)) \right)
		\cdot \ln \left( \frac{d}{\ln d}(1+o(1)) \right)\\
	&\qquad \qquad- \frac{n\ln^2 d}{d \lt(1+p+\frac{1}{\ln \ln d}\rt)} + \frac{\ln^3 d}{dp}(1+o(1))
		- \frac{\ln^4 d}{dp}(1+o(1))\Bigg]\\
	&= \exp \Bigg[ \left( \frac{1}{\ln q} - \frac{1}{p \lt(1+p+\frac{1}{\ln \ln d}\rt)} \right) \, \ln^2 d
		-\frac{3 \ln d \, \ln \ln d}{\ln q}(1+o(1)) \Bigg],
\end{align*}
where in the equality we use the fact that $\frac{\ln^4 d}{dp}=o\lt(\frac{\ln d\ln\ln d}{\ln q}\rt)$.
Now,
\begin{align*}
\ln^2 d\lt( \frac{1}{\ln q} - \frac{1}{p\lt(1+p+\frac{1}{\ln \ln d}\rt)}\rt)
&=\frac{\ln^2 d}{\ln q} \lt( 1- \frac{p+p^2/2 +\Ocal (p^3)}{p\lt(1+p+\frac{1}{\ln \ln d}\rt)}\rt)\\
&= \frac{\ln^2 d}{\ln q}\lt( \frac{p}{2} + \frac{1}{\ln \ln d} +\Ocal (p^2) \rt) (1+o(1))\\
&= \lt(\frac{\ln^2 d}{2} +\frac{\ln^2 d}{\ln q\, \ln \ln d}\rt) (1+o(1)).
\end{align*}
Therefore,
\begin{align*}
\expec(X_{r}) &> \exp \Big[  (0.5+o(1))\ln^2 d \Big] > \exp [- \ln d ]
\end{align*}
for large $n$, so by definition,
$\myr <  \left\lceil \Ln \left(\frac{d}{\ln^2 d} \lt(1+p+\frac{1}{\ln \ln d}\rt) \right) \right\rceil$.

Part $(i)$ then follows from the minimality of $\myr$ and since $d\rightarrow \infty$.

Part $(ii)$ follows from the definition of $\myr$ and by Lemma~\ref{expecJump}:
\[	\expec(X_{\myr +2}) = \exp \lt[ (1+o(1))\ln^2 d\rt ] \cdot \expec (X_{\myr +1}) \geq \exp \lt[ (1+o(1))\ln^2 d -\ln d\rt]\rightarrow\infty .\]
\end{proof}
\noindent
Note that from part $(i)$ of the previous lemma it follows by the standard first moment argument that
\[ \prob (D(G) \leq \myr ) = \prob (X_{\myr}  >0)
				 \leq \expec(X_{\myr})
				 \rightarrow 0.\]
Hence, $\prob(D(G) \geq \myr + 1) \rightarrow 1$.
This proves the lower bound of the interval of concentration in the
sparse range of the edge probability $p$ in Theorem~\ref{MAIN}.

Part $(ii)$ is of course only the first step in deducing $\prob (D(G) \leq \myr +2) \rightarrow 1$ for
the upper bound of the interval. We provide the full analysis in Section~\ref{sec:2ptConc}, using the second moment method.

%
\subsection{The dense case}
Now we take a look at the behaviour of the expected number of dominating sets of size $r$
when the edge probability $p$ tends to $1$ moderately fast.
\begin{lem}
\label{expecDense}
Let $p \rightarrow 1$ such that $\frac{1}{1-p}= q \leq n$.
For $r = \Ln \left(\frac{n\ln q}{\ln^2 n} \right)$ we have
\[\expec (X_{\lfloor r \rfloor}) \rightarrow 0.\]
\end{lem}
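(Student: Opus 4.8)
The plan is to bound $\expec(X_{\lfloor r\rfloor})$ from above through the first-moment formula and to show that its logarithm tends to $-\infty$. Writing $\expec(X_s)=\binom{n}{s}\bigl(1-(1-p)^s\bigr)^{n-s}$ and using $\binom{n}{s}\le (en/s)^s$ together with $1-x\le e^{-x}$, one obtains
\[
\ln \expec(X_s)\le s\,\ln\!\frac{en}{s}-(n-s)(1-p)^s .
\]
I would first remove the floor by passing to the real exponent $r=\Ln\!\bigl(\tfrac{n\ln q}{\ln^2 n}\bigr)$. The map $x\mapsto x\ln(en/x)$ is increasing on $(0,n)$ and $(1-p)^x$ is decreasing in $x$, so replacing $\lfloor r\rfloor$ by $r\ (\ge \lfloor r\rfloor)$ only increases the right-hand side; hence it suffices to prove $r\ln(en/r)-(n-r)(1-p)^r\to-\infty$ for the real value $r$.

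Next I would substitute the defining relation. Since $q^{r}=\frac{n\ln q}{\ln^2 n}$, we have $(1-p)^{r}=q^{-r}=\frac{\ln^2 n}{n\ln q}$, so that the penalty term is
\[
(n-r)(1-p)^{r}=\frac{\ln^2 n}{\ln q}\,(1+o(1)),
\]
where $r/n\to0$ follows from $r\le \log_q n=\frac{\ln n}{\ln q}=o(n)$. Writing $L=\ln n$ and $M=\ln q$ for brevity, a short computation gives $r=\frac{L+\ln M-2\ln L}{M}$ and $\ln(n/r)=L+\ln M-\ln L+o(1)$, whence, after expanding the product,
\[
r\,\ln\!\frac{en}{r}=\frac{L^2}{M}+\frac{L\,(2\ln M-3\ln L)}{M}+o\!\Bigl(\frac{L\ln L}{M}\Bigr).
\]

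The crucial point, and the step I expect to be the main obstacle, is that the leading terms $\frac{L^2}{M}=\frac{\ln^2 n}{\ln q}$ arising from the binomial coefficient and from the penalty term cancel exactly, in the same way that the two $\frac{\ln^2 d}{\cdot}$ contributions cancel in the sparse Lemma~\ref{expecSparse}; the sign of $\ln\expec(X_{\lfloor r\rfloor})$ is therefore decided entirely by the next-order term $\frac{L(2\ln M-3\ln L)}{M}$. Here the hypothesis $q\le n$ enters decisively: it yields $\ln M=\ln\ln q\le \ln\ln n=\ln L$, so that $2\ln M-3\ln L\le -\ln L$, giving
\[
\ln\expec(X_{\lfloor r\rfloor})\le -\frac{L\ln L}{M}\,(1+o(1)).
\]
Since $M\le L$ implies $\frac{L\ln L}{M}\ge \ln L=\ln\ln n\to\infty$, the bound tends to $-\infty$ and hence $\expec(X_{\lfloor r\rfloor})\to0$. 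The only genuinely delicate work is tracking the two leading terms precisely enough to witness both the cancellation and that the surviving term is negative; the remaining manipulations are routine expansions of logarithms.
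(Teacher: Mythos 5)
Your proposal is correct and follows essentially the same route as the paper: the bound $\binom{n}{\lfloor r\rfloor}\le (en/\lfloor r\rfloor)^{\lfloor r\rfloor}$ together with $1-x\le e^{-x}$, the exact cancellation of the two $\frac{\ln^2 n}{\ln q}$ terms coming from $r\ln n$ and $n(1-p)^r$, and the hypothesis $q\le n$ (i.e.\ $\ln\ln q\le\ln\ln n$) to show the surviving term tends to $-\infty$. The only cosmetic difference is that the paper keeps the identity exact and bounds the residual $r\left((1-p)^r+1-\ln r\right)$ by the constant $e$, whereas you expand everything asymptotically; both are fine.
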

\begin{proof}
Note that $r = \frac{\ln n - 2\ln \ln n +\ln \ln q}{\ln q} \leq \frac{\ln n}{\ln q} \leq \ln n$
and $(1-p)^r = \frac{\ln^2 n }{n \ln q} \rightarrow 0$ since $q \rightarrow \infty$.
\begin{align}
\expec (X_{\lfloor r\rfloor})
	&= \binom{n}{\lfloor r\rfloor}
	\cdot \left(1-(1-p)^{\lfloor r\rfloor} \right)^{n-\lfloor r\rfloor} \nonumber \\
	&\leq \left( \frac{ne}{\lfloor r\rfloor} \right)^{\lfloor r\rfloor}
	\cdot \exp \left[ -(n-\lfloor r\rfloor)(1-p)^{\lfloor r\rfloor} \right] \nonumber \\
	&\leq \exp \Big[ r \Big(\ln n + 1 -\ln r \Big) - (n-r)(1-p)^r \Big] \nonumber \\									
&= \exp \Big[ (-2\ln \ln n +\ln \ln q) \frac{\ln n}{\ln q} + r((1-p)^r +1 - \ln r)\Big]
	\rightarrow 0,\nonumber
\end{align}
since $q \leq n$ and $r((1-p)^r +1 - \ln r) \leq r(2-\ln r)$ is bounded from above by the constant $e$.
\end{proof}

One can also show that, analogously to the sparse case, $\expec (X_{\lfloor r \rfloor +2})  \rightarrow \infty$.
Since we do not need this fact further, we omit the calculation.

%
%
\section{The variance}
\label{sec:2ptConc}
In this section, we prove Theorem~\ref{MAIN}.
Since the validity of the theorem  was shown already
for constant $p$ in~\cite{wg2001},
we restrict our attention to the cases when $p$ tends to 0 or 1.
We will refer to three cases
\begin{itemize}
\item {\em the sparse case}, when $p \rightarrow 0$ but $p \gg \frac{\ln ^2 n}{\sqrt{n}}$.
	In this case recall that
	$\myr = \min \{\, r\ | \ \expec(X_{r}) \geq \exp \left[- \ln d\right] \} -1$
	and let us set $r = \myr +2$;
\item {\em the dense case}, when $p \rightarrow 1$ but $q = \frac{1}{1-p} \leq n$.
	In this case we set $\myr = \Ln \left(\frac{n\ln q}{\ln^2 n} \right)$
	and $r= \lfloor \myr \rfloor +2$;
\item {\em the very dense case}, when $q = \frac{1}{1-p} > n$.
\end{itemize}
The third case is straightforward and will be treated separately at the end of this section.

The calculations for the sparse and the dense case are often identical,
so we treat these cases in parallel.
We want to apply Chebyshev's Inequality and conclude that
\[\prob (X_{r} = 0) \leq \frac{\var (X_{r})}{\expec (X_{r})^2} \rightarrow 0.\]
In the proof we will stumble over some expressions more than once,
so we bundle the information of their asymptotic behaviour
in the next observation.

\begin{obs}
\label{dictVar}
In both, the sparse and the dense case, we have,
\begin{enumerate}[$(i)$]
\item $(1-p)^r \rightarrow 0$,
\item $r^2 = o(n)$,
\item $r(1-p)^r \rightarrow 0$,
\item $n (1-p)^{2r-1} \rightarrow 0$.
\end{enumerate}
\end{obs}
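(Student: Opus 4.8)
The plan is to verify the four estimates one at a time, in each case separately for the sparse and the dense regime, reducing every claim to an elementary limit that is forced by the defining hypotheses on $p$. In both regimes the only inputs I need are good control of the two quantities $r$ and $(1-p)^r$, which the expectation computations of this section already supply.

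In the sparse case we set $r=\myr+2$. Since shifting $\myr$ by the constant $2 = o(1/p)=\log_q(1+o(1))$ only multiplies the argument of $\Ln$ by the bounded factor $q^2=1+o(1)$, Lemma~\ref{expecSparse} still gives $r=\Ln\left(\frac{d}{\ln^2 d}(1+o(1))\right)$, so Observation~\ref{dict} applies and yields $r=\frac{n\ln d}{d}(1+o(1))$ and $(1-p)^r=\frac{\ln^2 d}{d}(1+o(1))$; moreover $(1-p)^{-1}=1+o(1)$ because $p\to 0$. Substituting these into the four expressions and using $d=np$, I expect each nontrivial item to collapse to the shape $\frac{\ln^k d}{np^2}(1+o(1))$: item $(ii)$ gives $r^2/n=\frac{\ln^2 d}{np^2}(1+o(1))$, item $(iii)$ gives $r(1-p)^r=\frac{\ln^3 d}{np^2}(1+o(1))$, and item $(iv)$ gives $n(1-p)^{2r-1}=n(1-p)^{-1}\left((1-p)^r\right)^2=\frac{\ln^4 d}{np^2}(1+o(1))$, while $(i)$ is already Observation~\ref{dict}$(ii)$. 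Since $\ln d\le\ln n$, each of these is at most $\frac{\ln^k n}{np^2}$, and the hypothesis $p\gg\frac{\ln^2 n}{\sqrt n}$ is exactly the statement $np^2\gg\ln^4 n$, i.e.\ $\frac{\ln^4 n}{np^2}\to 0$; this disposes of $(iv)$ and, with room to spare, of the weaker $(ii)$ and $(iii)$.

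In the dense case we set $r=\lfloor\myr\rfloor+2$ with $\myr=\Ln\left(\frac{n\ln q}{\ln^2 n}\right)$, and I would reuse the two bounds already established inside the proof of Lemma~\ref{expecDense}, namely $\myr\le\frac{\ln n}{\ln q}\le\ln n$ and $(1-p)^{\myr}=\frac{\ln^2 n}{n\ln q}$, together with $q\to\infty$ and $\ln q\to\infty$. Item $(ii)$ is immediate since $r\le\ln n+2$, so $r^2=o(n)$. For $(i)$ and $(iii)$ it is enough that $r>\myr$, whence $(1-p)^r<(1-p)^{\myr}=\frac{\ln^2 n}{n\ln q}\to 0$ and $r(1-p)^r\le(2\ln n)\frac{\ln^2 n}{n\ln q}\to 0$. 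The one place that needs care is $(iv)$: writing $n(1-p)^{2r-1}=n\,q\left((1-p)^r\right)^2$ looks dangerous because of the stray factor $q=(1-p)^{-1}$, but the choice $r=\lfloor\myr\rfloor+2>\myr+1$ gives $(1-p)^r<\frac1q(1-p)^{\myr}$, so the factor $q$ is absorbed and we are left with $n(1-p)^{2r-1}<\frac{\ln^4 n}{q\,n\,\ln^2 q}\to 0$.

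The main obstacle, and the only step where anything can go wrong, is therefore item $(iv)$. In the sparse regime it is the single estimate that consumes the full strength of $p\gg\frac{\ln^2 n}{\sqrt n}$, which is precisely why this is the boundary of the method; in the dense regime it is the one estimate in which the extra factor $q=(1-p)^{-1}$ must be compensated, and this succeeds only because $r$ was defined as $\lfloor\myr\rfloor+2$ rather than $\myr$ itself. All remaining items follow from the same two inputs with a comfortable margin.
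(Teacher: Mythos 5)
Your proof is correct and follows essentially the same route as the paper: in the sparse case both arguments reduce all four items to $\frac{\ln^k d}{np^2}(1+o(1))$ via Observation~\ref{dict} and then invoke $p\gg \ln^2 n/\sqrt n$ (equivalently $np^2\gg\ln^4 n$), and in the dense case both use the bounds $r\leq \frac{\ln n}{\ln q}+2$ and $(1-p)^{\myr}=\frac{\ln^2 n}{n\ln q}$, absorbing the stray factor $q$ in item $(iv)$ by the fact that $r$ exceeds $\myr$ by at least one. No gaps.
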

\begin{proof}
We treat the two cases separately. \\
{\em The sparse case.}
By Lemma~\ref{expecSparse}, $\myr$ and thus $r=\myr +2$
are of the form $\log_q\left(\frac{d}{\ln^2 d}(1+o(1)) \right)$,
and thus Observation~\ref{dict} applies.
Part $(i)$ is just Observation~\ref{dict} $(ii)$.
For the rest we use Observation~\ref{dict} and that $p\gg \frac{\ln^2 d}{\sqrt{n}}$ and $d\rightarrow \infty$:
\[ \frac{r^2}{n} = \frac{\ln^2 d}{np^2} (1+o(1)) \rightarrow 0, \qquad
	r(1-p)^r = \frac{\ln^3 d}{np^2} (1+o(1)) \rightarrow 0, \quad \text{ and } \quad
	n(1-p)^{2r} = \frac{\ln^4 d}{np^2}(1+o(1)) \rightarrow 0. \]
{\em The dense case.}
By definition, $r
= \left\lfloor\frac{\ln n - 2\ln\ln n +\ln\ln q}{\ln q}\right\rfloor +2$.
Therefore, for large $n$,
\[  \Ln \left(\frac{n\, \ln q}{\ln^2 n}\right) + 1
\leq r \leq \frac{\ln n}{\ln q} +2,\]
since $q\leq n$.
Hence $r\geq 1$ and part $(i)$ follows since $p\rightarrow 1$.
For part $(ii)$ we have
\[\frac{r^2}{n} \leq \frac{1}{n}\left(\frac{\ln n}{\ln q}+2 \right)^2 =o(1).\]
For part $(iii)$ and $(iv)$ we have
\begin{align*}
r(1-p)^r &\leq \left(\frac{\ln n}{\ln q}+2 \right) \frac{\ln^2 n}{n \ln q} (1-p) =o(1), \\
n(1-p)^{2r-1} &\leq \frac{\ln^4 n}{n\ln^2 q} (1-p)= o(1).
\end{align*}
\end{proof}
%
%
%
%
%
For the variance $\var (X_r) = \expec(X_r^2)-\expec(X_r)^2$
we need to calculate $\expec(X_r^2) $.\\
Let $I_A$ be the indicator
random variable of the event that subset $A\subseteq [n]$ is dominating.
Then
\begin{align}
\label{eq:expecSquared}
\expec (X_r^2)
	&= \sum_{A,B \in \binom{[n]}{r}} \expec (I_A \cdot I_B)
	= \sum_{A \in \binom{[n]}{r}} \sum_{s=0}^r
		\sum_{\substack{B\in {[n]\choose r}\\ |A\cap B| =s}}   \!\!\!
		\expec (I_A \cdot I_B)\nonumber \\
\shortintertext{Now, for $A,B \in \binom{[n]}{r}$, $|A\cap B| =s$, we have}
\expec (I_A \cdot I_B)
	&\leq \prob \Big(A \text{ dominates }
		\overline{A \cup B} \, \land \,
		B \text{ dominates } \overline{A \cup B} \Big)  \nonumber \\
  	&= \prob \Big( \forall x\in \overline{A \cup B}
  		\quad \exists \, y_1 \in A \cap \Gamma (x) \,
  		\land \, \exists \, y_2 \in B\cap \Gamma (x) \Big) \nonumber \\
  	&= \Big( 1 - 2(1-p)^r + (1-p)^{2r - s} \Big)^{n-2r+s}. \nonumber \\
\shortintertext{So, }
\var (X_r)
	&\leq \binom{n}{r} \sum_{s=0}^r {r\choose s}{n-r\choose r-s}
		\Big( 1 - 2(1-p)^r + (1-p)^{2r - s} \Big)^{n-2r+s} - \expec (X_r)^2.
\end{align}
First we see that the $s=0$ term of the sum is asymptotically at most $\expec (X_r)^2$:
\begin{align*}
& \binom{n}{r} \binom{n-r}{r} \left( 1 - 2(1-p)^{r} + (1-p)^{2r} \right)^{n-2r} \\
&\qquad = \binom{n}{r}^2 \Big(1-(1-p)^r\Big)^{2(n-r)} \,
	\frac{\binom{n-r}{r}}{\binom{n}{r}}
	\Big(1-(1-p)^r \Big)^{-2r} \\
&\qquad \leq \expec(X_r)^2 \cdot \exp \left[2r (1-p)^r + 2r(1-p)^{2r} \right] \\
&\qquad = \expec(X_r)^2 (1+o(1))
\end{align*}
The inequality holds because $(1-p)^r \rightarrow 0$ by Observation~\ref{dictVar} $(i)$.
The final conclusion follows from Observation~\ref{dictVar} $(iii)$.

Now we estimate the remaining terms of the sum~\eqref{eq:expecSquared}.
It turns out that the term of $s=1$ dominates the rest:
Let
\[f(s) = {r\choose s}{n-r\choose r-s}\Big( 1 - 2(1-p)^r + (1-p)^{2r - s} \Big)^{n-2r+s}.\]
We have just seen that
$\var (X_r) \leq \binom{n}{r} \sum_{s=1}^r f(s) + o (\expec(X_r)^2 )$.
We will prove that for large $n$,
\begin{equation} \label{varFinal}
\binom{n}{r} \sum_{s=1}^r f(s) \leq 3\, \binom{n}{r} f(1) .
\end{equation}
First let us show that indeed,~\eqref{varFinal} implies $\var (X_r) = o( \expec (X_r)^2).$
\begin{align} \label{finally}
\frac{\binom{n}{r} f(1)}{\expec (X_r)^2}
	&= \frac{r \binom{n-r}{r-1} \Big( 1-2(1-p)^r+(1-p)^{2r-1} \Big)^{n-2r+1}}
			{\binom{n}{r} (1-(1-p)^r)^{2(n-r)}} \nonumber \\
	&= \frac{r^2}{n} \cdot \frac{\binom{n-r}{r-1}}{\binom{n-1}{r-1}}
		\cdot \left(1 + \frac{p(1-p)^{2r-1}}{(1-(1-p)^r)^2} \right)^{n-r}
		\cdot \left(\frac{1}{1-2(1-p)^r+(1-p)^{2r-1}} \right)^{r-1} \nonumber \\
	&\leq \frac{r^2}{n} \exp \Bigg[ \Big((n-r)p (1-p)^{2r-1} + (r-1)2(1-p)^r \Big) (1+o(1)) \Bigg] \nonumber\\
	& \rightarrow 0,
\end{align}
where in the last inequality we use again that $(1-p)^r \rightarrow 0$ by Observation~\ref{dictVar} $(i)$.
The final conclusion follows by Observation~\ref{dictVar} $(ii)$, $(iii)$ and $(iv)$.
Thus, $\var (X_r) = o( \expec (X_r)^2 )$ follows.

So we only need to show that~\eqref{varFinal} holds.
We consider the expression
\begin{align*}
\frac{f(1)}{f(s)}
	& = \frac{ r{n-r\choose r-1}\Big( 1 - 2(1-p)^r + (1-p)^{2r - 1} \Big)^{n-2r+1}}
			{{r\choose s}{n-r\choose r-s}\Big( 1 - 2(1-p)^r + (1-p)^{2r - s} \Big)^{n-2r+s}}.
\end{align*}
Note first that for every $2\leq s \leq r$,
\begin{align}
\label{varBinomAllgemein}
\frac{\binom{n-r}{r-1}}{\binom{r}{s} \binom{n-r}{r-s}}
	&= \frac{(n-2r+s)_{s-1}}{(r-1)_{s-1}} \cdot \frac{s!}{(r)_s}
	\geq \left( \frac{n-2r}{r} \right)^{s-1} \cdot \left( \frac{1}{r} \right)^s \nonumber \\
	&= \exp \left[ (s-1) \ln \left( \frac{n-2r}{r} \right) - s \ln r \right].
\end{align}
Also, since $(1-p)^r \rightarrow 0$ by Observation~\ref{dictVar} $(i)$ and $r=o(n)$ by Observation~\ref{dictVar} $(ii)$,
for every $2\leq s \leq r$ we have that
\begin{align}
\label{varProbAllgemein}
\frac{\Big( 1 - 2(1-p)^r + (1-p)^{2r - 1} \Big)^{n-2r+1}}
	{\Big( 1 - 2(1-p)^r + (1-p)^{2r - s} \Big)^{n-2r+s}}
	& \geq \left( 1 - \frac{(1-p)^{2r-s} - (1-p)^{2r-1}}{1-2(1-p)^r +(1-p)^{2r-s}}\right)^{n-2r+1}\nonumber \\
& = \Big( 1- (1-p)^{2r-s}(1-(1-p)^{s-1})(1+o(1)) \Big)^{n-2r+1}\nonumber \\
& = \exp \Big[ - n (1-p)^{2r-s} (1-(1-p)^{s-1}) (1+o(1)) \Big].
\end{align}

\noindent
From now on, we need to separate the sparse and the dense case.
\subsection{The sparse case.}
\label{thesparsecase}
Recall that in this case $p\gg \frac{\ln^2 d}{\sqrt{n}}$ and $d\rightarrow \infty$.
In order to deduce~\eqref{varFinal}, we show for $n$ large enough
that for every $2\leq s \leq \min\{ \ln n, 1/\sqrt{p}\}$,
we have $f(1) \geq \ln n\, f(s)$,
and for every $\min\{\ln n,1/\sqrt{p}\} \leq s \leq r$, we have that $f(1) \geq r f(s)$.
Then we have that
\begin{align*}
\sum_{s=1}^r f(s) & \leq f(1) + \ln n\, \max \big\{ f(s): 2\leq s \leq \min\lt\{ \ln n, 1/\sqrt{p}\rt\}\big\}\\
	&\qquad + r \max \big\{ f(s) : \min\lt\{ \ln n, 1/\sqrt{p}\rt\} \leq s \leq r \big\}\\
& \leq 3 f(1).
\end{align*}

We split the analysis into three cases. \\

\noindent
{\bf Small range.}
First, suppose $2\leq s \leq \min\lt\{\ln n, 1/\sqrt{p}\rt\}$.
Then, since $s \ll 1/p$, we have that
\begin{align} \label{bernoulli}
(1-p)^s \geq 1-ps \qquad \text{and thus} \qquad (1-p)^s = 1+o(1).
\end{align}
So,
\begin{align*}
\frac{f(1)}{\ln n \cdot f(s)}
	& = \frac{r}{\ln n} \cdot
		\frac{ {n-r\choose r-1}}{{r\choose s}{n-r\choose r-s}}  \cdot
		\frac{ \Big( 1 - 2(1-p)^r + (1-p)^{2r - 1} \Big)^{n-2r+1}}{\Big( 1 - 2(1-p)^r + (1-p)^{2r - s} \Big)^{n-2r+s}}\\
	&\overset{\eqref{varBinomAllgemein},\eqref{varProbAllgemein},\eqref{bernoulli}}{\geq}
	\exp \Bigg[ \ln r - \ln \ln n + (s-1) \ln \left( \frac{n-2r}{r} \right) - s \ln r \\
	&\qquad - np(s-1) (1-p)^{2r-s} (1+o(1)) \Bigg] \\
	&\overset{\eqref{bernoulli}}{=}
	\exp \Bigg[ (s-1) \left( \ln \left( \frac{n}{r} \right) + o(1) - \ln r
	- np(1-p)^{2r}(1+o(1)) \right) -\ln \ln n \Bigg]\\
	&\geq \exp \Bigg[ (s-1) \left( (1+o(1))  \ln \left(\frac{n}{r^2\, \ln n} \right) + o(1)\right)\Bigg] \\
	&\geq 1,
\end{align*}
where the second to last inequality follows from Observation~\ref{dictVar} $(iv)$,
whereas the last one follows since $s\geq 2$, and from
$\frac{n}{r^2\, \ln n} \geq \frac{np^2}{\ln ^3 n} (1+o(1)) \rightarrow \infty$
by Observation~\ref{dict} $(i)$ and since
$p \gg \frac{\ln^{3/2}n}{\sqrt{n}}$. \\

\noindent
{\bf Middle range.}
Now, let $\min \lt\{\ln n,\frac{1}{\sqrt{p}}\rt \} \leq s \leq 0.9\, r$.
Since $\min\lt\{\ln n, \frac{1}{\sqrt{p}}\rt\} \rightarrow \infty$, by~\eqref{varBinomAllgemein} and~\eqref{varProbAllgemein} we obtain
\begin{align}
\label{middleRange}
\frac{f(1)}{r \cdot f(s)}
	& = 	\frac{ {n-r\choose r-1}}{{r\choose s}{n-r\choose r-s}}  \cdot
		\frac{ \Big( 1 - 2(1-p)^r + (1-p)^{2r - 1} \Big)^{n-2r+1}}{\Big( 1 - 2(1-p)^r + (1-p)^{2r - s} \Big)^{n-2r+s}} \nonumber \\
	&\geq \exp \Bigg[ s \ln \left( \frac{n}{r^2} \right) (1+o(1))  - n (1-p)^{2r-s} (1+o(1)) \Bigg].
\end{align}
We will show that $s\, \ln \left(\frac{n}{r^2}\right) \gg  n (1-p)^{2r-s}$
and conclude that $f(1) \geq r\, f(s)$.
First note that $\ln \left( \frac{n}{r^2} \right) \rightarrow \infty$
by Observation~\ref{dictVar} $(ii)$.
Also, by Observation~\ref{dict}
$n(1-p)^{2r} = \frac{\ln^4 d}{np^2} (1+o(1))$.
By differentiating one finds that the function $g(s) := s(1-p)^s$
takes its minima at the endpoints of the interval
$\big[ \min\lt \{\ln n,\frac{1}{\sqrt{p}} \rt\}, 0.9r \big]$.
We check that both values $g\lt(  \min \lt\{\ln n,\frac{1}{\sqrt{p}}\rt \} \rt)$ and
$g(0.9\, r)$ have higher order than $\frac{\ln^4 d}{np^2}$, and hence for large enough $n$
the exponent of~\eqref{middleRange} is positive,
completing the proof of the middle range.
Firstly by Observation~\ref{dict},
\[0.9\, r (1-p)^{0.9\,r} = 0.9 \, \frac{\ln d}{p} \left(\frac{\ln ^2 d}{d}\right)^{0.9} (1+o(1))
 \gg \frac{\ln ^4 d}{np^2}.\]
Secondly since $p \rightarrow 0$ and by Observation~\ref{dictVar} $(iv)$,
\[\frac{1}{\sqrt{p}} (1-p)^{\frac{1}{\sqrt{p}}}= \frac{1}{\sqrt{p}} (1-o(1)) \gg 1 \gg n(1-p)^{2r}.\]
We need to bound $g(\ln n)$ only if $p < 1/ \ln ^2 n$,
otherwise $\min\lt \{\ln n,\frac{1}{\sqrt{p}}\rt \} = \frac{1}{\sqrt{p}}$.
But then
\[\ln n (1-p)^{\ln n} = \ln n \exp(-p \ln n (1+o(1))) \geq \ln n \exp(-1/\ln n)(1+o(1)) \gg n(1-p)^{2r}.\]
This completes the proof of the middle range. \\

\noindent
{\bf Large range.}
Finally, let $0.9 r \leq s \leq r$. Then
\begin{align}
\label{large-exp}
\frac{f(1)}{r\, f(s)}
	&= \frac{\binom{n-r}{r-1}}{\binom{r}{s}\binom{n-r}{r-s}}
		\cdot \frac{\Big( 1-2(1-p)^r+(1-p)^{2r-1} \Big)^{n-2r+1}}{\Big( 1-2(1-p)^r+(1-p)^{2r-s} \Big)^{n-2r+s}} \nonumber\\
	&= \expec (X_r) \cdot \frac{\binom{n-r}{r-1}}{\binom{n}{r}\binom{r}{s}\binom{n-r}{r-s}}
		\cdot \left[ \frac{1-2(1-p)^r+(1-p)^{2r-1}}{\Big( 1-2(1-p)^r+(1-p)^{2r-s} \Big)(1-(1-p)^r)} \right]^{n-r} \nonumber\\
	&\qquad	\cdot \frac{\Big( 1-2(1-p)^r+(1-p)^{2r-s}\Big)^{r-s}}{\Big( 1-2(1-p)^r+(1-p)^{2r-1} \Big)^{r-1}}\nonumber\\
	&\geq \expec (X_r) \frac{r (1+o(1))}{n} \cdot
		\frac{1}{\binom{r}{s}\binom{n-r}{r-s}}\cdot
		\left[1+\frac{(1-p)^r(1-(1-p)^{r-s})}{1-2(1-p)^r+(1-p)^{2r-s}}\right]^{n-r}\nonumber\\
	&\geq \exp \Big[(1+o(1))\, \ln^2d -\ln n -2 (r-s)\, \ln n + n(1-p)^r(1-(1-p)^{r-s})(1+o(1)) \Big],
\end{align}
where in the first inequality, we estimated the last factor by $1$ and used that
\[\frac{\binom{n-r}{r-1}}{\binom{n}{r}}  \geq \frac{r}{n}\left( 1-\frac{r-1}{n-r+1}\right)^{r-1}
	= \frac{r}{n} (1+o(1)), \]
since  $r^2 =o(n)$ by Observation~\ref{dictVar}~$(ii)$.
In the last inequality we estimated by
$\binom{r}{s} = \binom{r}{r-s} \leq n^{r-s}$ and
$\binom{n-r}{r-s} \leq n^{r-s}$ as well as used that
$r=o(n)$ and $(1-p)^r =o(1)$, by Observation~\ref{dict} $(i)$ and $(ii)$,
and Lemma~\ref{expecSparse} $(ii)$.
\noindent
Now since $d \gg \sqrt{n}\, \ln^2n$, $(1+o(1))\, \ln^2 d - \ln n \geq 1/4 \ln^2 n (1+o(1))$
in~\eqref{large-exp}.
Note that the $1+o(1)$-function in~\eqref{large-exp} does not depend on $s$.
So for sufficiently large $n$, this expression is larger than $1/2$,
and thus, it is enough to show that for large $n$ and every
$s \in [0.9r, r]$ in the large range
\begin{equation}
\label{largeRangeFinal}
-2(r-s)\, \ln n + \frac{1}{2}n(1-p)^r (1-(1-p)^{r-s}) \geq 0.
\end{equation}

To prove~\eqref{largeRangeFinal}
set $x= r-s$ and consider the function
$h(x) := -2x\, \ln n  + \frac{1}{2}n(1-p)^r(1-(1-p)^x)$
on the interval $[ 0, 0.1r ]$.
By differentiating twice we see that $h(x)$ is concave,
and therefore $h(x) \geq \min \{h(0), h(0.1\,r)\}$
for $0 \leq x \leq 0.1 r$. Now $h(0)=0$.
For the other endpoint, by Observation~\ref{dict} $(i)$ and $(ii)$, and since $d \gg \sqrt{n}$ we have
\begin{align*}
h(0.1 r) &=-0.2 r \ln n + \frac{1}{2}n(1-p)^r(1-(1-p)^{0.1\, r}) \\
	&= -0.2 r \ln n + \frac{r\ln d}{2} (1-o(1)) \rightarrow \infty.
\end{align*}
This finishes the proof of the large range,
and therefore also the proof of Theorem~\ref{MAIN} in the sparse case.
%
%
\subsection{The dense case}
Similarly to the sparse case, we aim to prove $\sum_{s=1}^{r} f(s) \leq 3 f(1)$.
We show in the following that for every $2\leq s\leq r$ we have $f(1) \geq r\, f(s)$.
This then implies~\eqref{varFinal}. \\
Recall that $q=\frac{1}{1-p} \rightarrow \infty$, $q\leq n$ and that
$r = \left\lfloor \log_q \left( \frac{n\, \ln q}{\ln ^2 n} \right) \right\rfloor +2$,
that is
\[ \frac{\ln n +\ln \ln q - 2 \ln \ln n}{\ln q} +1 \leq r \leq  \frac{\ln n +\ln \ln q - 2 \ln \ln n}{\ln q} +2.\]
Let $2\leq s\leq r$.
First note that $(1-p)^{s-1} \rightarrow 0$ since $p \rightarrow 1$.
Then by the inequalities~\eqref{varBinomAllgemein} and~\eqref{varProbAllgemein},
\begin{align*}
\frac{f(1)}{r\, f(s)}
	&\geq \exp \Bigg[ (s-1) \ln \left(\frac{n-2r}{r}\right) -  s \ln r
		-n(1-p)^{2r-s} (1-(1-p)^{s-1})(1+o(1)) \Bigg] \\
	&\geq \exp \Bigg[  (s-1) \bigg( \ln n +o(1)  -2\left(\frac{s}{s-1} \right) \ln r \bigg)
		-n(1-p)^{2r-s} (1+o(1)) \Bigg] \\
	&= \exp \Bigg[  (s-1) \ln n (1 +o(1)) -n(1-p)^{2r-s} (1+o(1)) \Bigg],
\end{align*}
since $\ln r \leq \ln \left(\frac{\ln n}{\ln q}\right) \leq \ln \ln n \ll \ln n$ and $\frac{s}{s-1} \leq 2$.
We will show that $(s-1) \ln n \geq 2 n(1-p)^{2r-s}$ for all $2\leq s \leq r$.
To this end, consider the function $h(x) := x \ln n - 2n(1-p)^{2r-1}(1-p)^{-x}$
on the interval $[ 1, r-1]$.
Differentiating twice shows that $h$ is concave,
and thus, $h(x) \geq \min \{h(1),h(r-1) \}$ for $1\leq x \leq r-1$.
Now, $h(1) = \ln n- 2 n(1-p)^{2r-2} = \ln n -o(1) \gg 1$ by Observation~\ref{dictVar} $(iii)$.
Secondly,
\begin{align*}
h(r-1) &= (r-1)\ln n -2n(1-p)^r \\
	&\geq \frac{\ln n +\ln \ln q - 2 \ln \ln n}{\ln q} \, \ln n - 2 \frac{\ln ^2 n}{q \, \ln q} \\
	&\geq \frac{\ln ^2 n}{\ln q} (1-o(1)) \geq \ln n \gg 1,
\end{align*}
where we used that
$(1-p)^r \leq \frac{\ln ^2 n}{n \ln q} (1-p)$,
$q \rightarrow \infty$ and that $q\leq n$.
We conclude that $f(1) \gg r f(s)$ for all $2\leq s \leq r$.
This finishes the proof of the dense case.

\subsection{The very dense case}
\label{sec:denseConcentration}
Now, we complete the picture when $p$ tends to 1.\\
\noindent
Let $p = p(n) \rightarrow 1$ such that $q > n$.
Then, $\Ln \left(\frac{n \ln q}{\ln ^2 n}\right) <1$
since $\frac{n}{\ln ^2 n} < \frac{q}{\ln q}$ for $n \geq 3$.
Thus $r = \lfloor \myr \rfloor + 2 \leq 2$ and we claim that the domination
number is a.a.s.~at most 2.
In fact, when $p \geq 1-1/n$, then the complement of $G\sim \Gnp$
has an isolated vertex a.a.s. But the very same vertex is
a dominating set of size 1 in $G$.

The proof of Theorem~\ref{MAIN} is complete.

%
%
\section{Concentration for smaller values of $p$}
\label{SEC:furtherStuff}

In this section we prove Proposition~\ref{SomeConcentration} and Theorem~\ref{MAINTAL}.

\begin{proof}[Proof of Proposition~\ref{SomeConcentration}]
Let $\myr$ be given by~\eqref{rDefinition}.
Then by Lemma~\ref{expecSparse} and Observation~\ref{dict},
$\myr = \log_q \left(\frac{d}{\ln^2 d}\, (1+o(1)) \right) = \frac{n}{d}\,\ln d\, (1+o(1))$
and $\expec(X_{\myr}) \rightarrow 0$.
Therefore,
\[ \prob (D(G)\leq  \myr )
	= \prob (X_{ \myr} >0)
	\leq \expec (X_{ \myr})
	\rightarrow 0, \]
and hence, $D(\Gnp )> \myr$ a.a.s., proving the lower bound.

For the upper bound, we set $r= n\frac{\ln d}{d}$ and apply the alteration technique from \cite[Theorem 1.2.2]{as2008}.
We take the set $[r]$ and calculate how many vertices are {\em not} dominated by it.
Adding these vertices to $[r]$ results in a dominating set.
Let $Y$ be the number of vertices not dominated by $[r]$. Then
$Y$ is the sum of $n-r$ independent Bernoulli trials with success probability $(1-p)^r$ each.
By Markov's Inequality, $\prob ( Y \leq \expec (Y) \ln\ln d ) \rightarrow 0$ since  $d\rightarrow \infty$.
So with probability tending to $1$, $\Gnp$ has a dominating set of size
\[r + \expec ( Y)\ln\ln d = r + (n-r) (1-p)^r\ln \ln d \leq n\frac{\ln d}{d} +  n \exp \left( -p n \frac{\ln d}{d}\right)\ln\ln d =
n\frac{\ln d}{d} (1+o(1)).
\]
Therefore, $D(\Gnp) \leq n\frac{\ln d}{d}(1+o(1))$ a.a.s.
\end{proof}
\noindent
We now prove Theorem~\ref{MAINTAL}.

\begin{proof}[Proof of Theorem~\ref{MAINTAL}.]
We use Talagrand's Inequality to show concentration in Theorem~\ref{MAINTAL}.
To that end, let us introduce the necessary terminology.
The following setting can be found in~\cite[Chapter~7.7]{as2008}.\\
Let $\Omega = \prod_{i=1}^N \Omega_i$ be the product space of probability spaces $\Omega_i$,
equipped with the product measure.
We say that a random variable $X:\Omega \rightarrow \mathbb{R}$ is {\em Lipschitz}, if
$|X(x)-X(y)| \leq 1$ whenever $x$ and $y$ differ in at most one coordinate.
Further, for a function $f:\mathbb{N} \rightarrow \mathbb{N}$, we say that $X$ is {\em $f$-certifiable}
if whenever $X(x)\geq s$ there exists $I\subseteq [N]$ with $|I|\leq f(s)$ such that for all $y \in \Omega$
with $x_I = y_I$ we have $X(y)\geq s$. We use the following version of Talagrand's Inequality.
\begin{thm}[Talagrand's Inequality]
\label{talagrand}
Let $f:\mathbb{N} \rightarrow \mathbb{N}$ be a function, and suppose $X:\Omega \rightarrow \real$
is a random variable that is Lipschitz and $f$-certifiable. Then for all $a$, $u \in \real$:
\[ \prob \Big(X\leq a-u \sqrt{f(a)} \Big)\cdot \prob(X\geq a) \leq e^{-u^2/4}.\]
\end{thm}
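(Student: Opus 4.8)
The plan is to deduce this functional form of Talagrand's inequality from the purely geometric ``convex distance'' inequality, which is the genuine probabilistic heart of the statement. For a measurable set $A \subseteq \Omega$ and a point $x \in \Omega$, define Talagrand's convex distance
\[ \rho(A,x) = \sup_{\substack{\alpha \in \real^N_{\geq 0} \\ |\alpha| = 1}} \ \inf_{y \in A} \sum_{i \, : \, x_i \neq y_i} \alpha_i , \]
where $|\alpha|$ is the Euclidean norm. The core statement I would establish first is that for every measurable $A$ and every $t \geq 0$,
\[ \prob(A) \cdot \prob\big( \rho(A, \cdot) \geq t \big) \leq e^{-t^2/4}. \]
Granting this, the rest is a short geometric reduction using the two hypotheses on $X$.

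For the reduction, fix $a, u \in \real$; I may assume $u \geq 0$ and $A := \{y \in \Omega : X(y) \leq a - u\sqrt{f(a)}\} \neq \emptyset$, writing $b = a - u\sqrt{f(a)}$. I would show that every $x$ with $X(x) \geq a$ satisfies $\rho(A,x) \geq u$. Since $X(x) \geq a$, $f$-certifiability supplies an index set $I$ with $|I| \leq f(a)$ such that $X(z) \geq a$ for every $z$ agreeing with $x$ on $I$. Given any $y \in A$, set $k(y) := |\{i \in I : x_i \neq y_i\}|$ and form $z$ by resetting the coordinates of $y$ in $I$ to match $x$; each of the $k(y)$ actual changes alters $X$ by at most $1$ by the Lipschitz property, so $a \leq X(z) \leq X(y) + k(y) \leq b + k(y)$, giving $k(y) \geq a - b = u\sqrt{f(a)}$. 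Testing the supremum defining $\rho(A,x)$ with the unit vector $\alpha$ equal to $|I|^{-1/2}$ on $I$ and $0$ elsewhere yields
\[ \rho(A,x) \geq \inf_{y \in A} \frac{k(y)}{\sqrt{|I|}} \geq \frac{u\sqrt{f(a)}}{\sqrt{f(a)}} = u . \]
Hence $\{X \geq a\} \subseteq \{\rho(A,\cdot) \geq u\}$, and the core inequality gives $\prob(X \leq a - u\sqrt{f(a)}) \cdot \prob(X \geq a) = \prob(A)\cdot \prob(X \geq a) \leq \prob(A)\cdot \prob(\rho(A,\cdot)\geq u) \leq e^{-u^2/4}$, which is exactly the claim.

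The real work, and the step I expect to be the main obstacle, is the core convex-distance inequality. I would prove the stronger integrated bound $\int_\Omega e^{\rho(A,x)^2/4}\, d\mu(x) \leq 1/\prob(A)$, from which the displayed core inequality follows by applying Markov's inequality to $e^{\rho^2/4}$. The integrated bound is proved by induction on the number of coordinates $N$. The one-coordinate base case reduces to checking the elementary scalar inequality $p + (1-p)e^{1/4} \leq 1/p$ for $p = \prob(A) \in (0,1]$. For the inductive step I would split $\Omega = \Omega' \times \Omega_N$, relate $\rho\big(A,(x',\omega)\big)$ to the convex distances of the section $A_\omega = \{x' : (x',\omega) \in A\}$ and of the projection $A' = \{x' : (x',\omega'') \in A \text{ for some } \omega''\}$, and then combine the two inductive estimates.

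The delicate point is the convexity/interpolation argument that closes the induction. Writing $\rho$ at level $N$ as a combination of the two lower-dimensional distances controlled by a parameter $\lambda \in [0,1]$, one bounds $e^{\rho^2/4}$ by a $\lambda$-interpolation of the two exponentials, integrates using the inductive hypothesis in dimension $N-1$, and finally optimizes over $\lambda$ via the scalar lemma
\[ \inf_{0 \leq \lambda \leq 1} \lambda^{-s} \, e^{(1-\lambda)^2/4} \leq 2 - s \qquad (0 \leq s \leq 1), \]
applied with $s$ equal to the measure ratio $\mu(A_\omega)/\mu(A')$. Everything after this inequality is bookkeeping; the optimization over $\lambda$ is where all the content sits, and it is the only place where the constant $4$ in the exponent is forced.
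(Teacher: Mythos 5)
The paper never proves Theorem~\ref{talagrand}: it is imported verbatim from \cite[Chapter~7.7]{as2008} and used as a black box, the paper's own work beginning only at Corollary~\ref{lemT1}. Your proposal therefore reconstructs the standard Talagrand argument from the cited source, and the reconstruction is essentially correct. The reduction is complete as written: for $X(x)\geq a$ the certificate $I$ with $|I|\leq f(a)$, combined with the Lipschitz property applied coordinate by coordinate, forces every $y$ with $X(y)\leq a-u\sqrt{f(a)}$ to differ from $x$ in at least $u\sqrt{f(a)}$ coordinates of $I$, and the unit vector supported uniformly on $I$ then gives $\rho(A,x)\geq u$ precisely because $|I|\leq f(a)$. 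The base case of your induction also checks out: $p+(1-p)e^{1/4}\leq 1/p$ holds on $(0,1]$ because $p^2\bigl(1-e^{1/4}\bigr)+pe^{1/4}$ is concave, vanishes at $p=0$, equals $1$ at $p=1$, and attains its maximum beyond $p=1$.

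Two corrections. First, the scalar lemma that closes the induction is mis-transcribed: the convexity bound $\rho(A,x)^2\leq(1-\lambda)^2+\lambda\,\rho(A_\omega,x')^2+(1-\lambda)\,\rho(A',x')^2$, followed by H\"older's inequality and the inductive hypothesis in dimension $N-1$, produces the bound $\mu(A')^{-1}\,s^{-\lambda}\,e^{(1-\lambda)^2/4}$ with $s=\mu(A_\omega)/\mu(A')\leq 1$, so the inequality you need is $\inf_{0\leq\lambda\leq 1} s^{-\lambda}e^{(1-\lambda)^2/4}\leq 2-s$. Your version $\lambda^{-s}e^{(1-\lambda)^2/4}$ has the roles of $s$ and $\lambda$ swapped; as written it is trivially true (take $\lambda=1$) but does not bound the quantity the induction actually yields. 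One must then still integrate over $\omega$ and apply $x(2-x)\leq 1$ with $x=\mu(A)/\mu(A')$ --- this is the ``bookkeeping'' you allude to, and it is indeed routine. Second, your silent restriction to $u\geq 0$ is not merely convenient but necessary: for $u<0$ the stated inequality is false (take $X$ constant, so both probabilities equal $1$ while $e^{-u^2/4}<1$), so the ``for all $a,u\in\real$'' in the statement, inherited from the source, must be read with $u\geq 0$; your proof covers exactly the range where the theorem holds.
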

\begin{cor}
\label{lemT1}
For all $b, t \in \mathbb{R}$,
\begin{equation} \label{eq:T1}
	\prob (D(G) \leq b)\cdot \prob (D(G) \geq b+t) \leq e^{-t^2/4(n-b)}.
\end{equation}
\end{cor}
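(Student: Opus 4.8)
The plan is to deduce Corollary~\ref{lemT1} directly from Talagrand's Inequality (Theorem~\ref{talagrand}), applied not to $D(G)$ itself but to the random variable $X := n - D(G)$ on the product space $\Omega = \prod_{\{i,j\}} \Omega_{ij}$ with $N=\binom{n}{2}$ coordinates, where the coordinate $\Omega_{ij}$ records whether $\{i,j\}$ is an edge (present with probability $p$, independently). The point of passing to $n-D(G)$ is that $D(G)$ is a \emph{decreasing} function of the edge set, so the genuinely certifiable event is that $D(G)$ is \emph{small}; under the substitution this becomes the upper tail $\{X\ge a\}$ required by Theorem~\ref{talagrand}. I would also note at the outset that, since $D(G)$ is integer-valued, we may assume $b\in\nat$ with $0\le b<n$, as otherwise \eqref{eq:T1} is vacuous.

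First I would verify that $X$ is Lipschitz, for which it suffices that flipping the status of a single pair $\{u,v\}$ changes $D(G)$ by at most one. Adding an edge can only help domination, so it cannot increase $D$; conversely, if $S$ is an optimal dominating set of $G+\{u,v\}$, then deleting this edge can leave at most one vertex undominated (it would have to be $u$ or $v$, and a short case check shows not both), and adding that single vertex to $S$ repairs domination. Hence $|D(G_x)-D(G_y)|\le 1$, and the same holds for $X$, whenever $x,y$ differ in one coordinate.

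The key step is certifiability, where a padding trick makes the certificate size land exactly where the exponent needs it: I claim $X$ is $f$-certifiable with $f(a)=a$. Suppose $X(x)\ge a$, i.e.\ $D(G_x)\le n-a$. Take a minimum dominating set and enlarge it by arbitrary vertices to a dominating set $S$ of size \emph{exactly} $n-a$ (any superset of a dominating set is dominating, and $n-a\le n$). Then exactly $a$ vertices lie outside $S$, each having a neighbour in $S$; choosing one witnessing edge per outside vertex yields a coordinate set $I$ with $|I|\le a$. Any $y$ agreeing with $x$ on $I$ retains all these edges, so $S$ still dominates $G_y$ and $D(G_y)\le n-a$, i.e.\ $X(y)\ge a$. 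This certifies $\{X\ge a\}$ with $f(a)=a$ (the cases $a\notin\nat$ or $a>n$ being reducible to the integral case or vacuous).

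Finally I would apply Theorem~\ref{talagrand} with $a=n-b$ and $u=t/\sqrt{n-b}$, so that $u\sqrt{f(a)}=u\sqrt{n-b}=t$. Under $X=n-D(G)$ one has $\{X\ge a\}=\{D(G)\le b\}$ and $\{X\le a-u\sqrt{f(a)}\}=\{D(G)\ge b+t\}$, so the conclusion $\prob(X\le a-u\sqrt{f(a)})\cdot\prob(X\ge a)\le e^{-u^2/4}$ becomes precisely \eqref{eq:T1}, with $e^{-u^2/4}=e^{-t^2/(4(n-b))}$. The only real obstacle is conceptual rather than computational: recognizing that domination is monotone in the wrong direction for a naive application, which forces the passage to $n-D(G)$, and then arranging the certificate by padding so that $f(a)=a=n-b$ appears exactly in the denominator of the exponent.
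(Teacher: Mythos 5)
Your proposal is correct and follows essentially the same route as the paper's own proof: pass to $X=n-D(G)$ on the product space of $\binom{n}{2}$ edge indicators, verify the Lipschitz property, certify $\{X\ge a\}$ with $f(a)=a$ by padding a dominating set to size exactly $n-a$ and selecting one witnessing edge per outside vertex, and then substitute $a=n-b$, $u=t/\sqrt{n-b}$ in Theorem~\ref{talagrand}. The only differences are cosmetic (you spell out the one-vertex repair in the Lipschitz check and the padding step, which the paper leaves implicit).
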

\begin{proof}
We check that Theorem~\ref{talagrand} can be applied to our situation.
For that reason, we identify a graph $G$ with its edge set, and view \Gnp\ as the product of $N=\binom{n}{2}$
Bernoulli experiments with parameter $p$. Let us consider the random variable
$X : \Gnp \rightarrow \real$ defined by $X(G) = n - D(G)$. Clearly, $X$ is Lipschitz, since adding
or deleting an edge changes the domination number (and hence $X$) by at most one.
Further, $X$ is $f$-certifiable, where $f(s)=s$.
To see this, assume $X(G) \geq s$, i.e. $D(G) \leq n-s$.
Then there exists a dominating set $S$ of size $n-s$.
We can choose $s$ edges, one from each $v \in \left( V(G) \setminus S \right)$ to $S$,
which certify that $D(G) \leq n-s$ (more precisely that $S$ is a dominating set).
Clearly, any graph $H$ that contains those $s$ edges will have $D(H) \leq n-s$, or $X(H) \geq s$, respectively.	
Now, it follows by Theorem~\ref{talagrand}, that for all $a,u \in \real$,
\begin{align*} \label{eq:T2}
	&\ &\prob \Big(n - D(G) \leq a - u\sqrt{a}\Big)\cdot \prob \Big(n-D(G) \geq a \Big) &\leq e^{-u^2/4}.
\end{align*}
Substituting $b=n-a$ and $t = u \sqrt{a}$ proves the claim.
\end{proof}
\noindent
To turn Corollary~\ref{lemT1} into a meaningful result let $t=t(n)$ be any function such that $t = \omega (\sqrt{n})$.
If we now set $b$ to be the median in Corollary~\ref{lemT1}, then
we obtain $\prob (D(G) \geq m+t) \leq 2 e^{-t^2/4n}$.
Analogously, setting $b+t = m$ gives $\prob (D(G) \leq m-t) \leq 2 e^{-t^2/4n}$.
Hence, Theorem~\ref{MAINTAL} follows.
\end{proof}
%
%
%
%
%
\section{Non-concentration}
\label{SEC:nonConc}
In this section we prove Theorem~\ref{newNonConc}, 
giving a justification for the existence of some lower bound on $p$ in Theorem~\ref{MAIN}.

\begin{proof}[Proof of Theorem~\ref{newNonConc} (a) and (c)]
Let us first describe the common idea of the two proofs.
We assume to the contrary that for $G\sim \Gnp$, $D(G)$ is in some interval 
$I$ of integers a.a.s.~(in case of (a) $I$ is an interval of length $Kn\sqrt{p}$,  
in case of (c) $I=[1, \myr + c\frac{\myr}{n\sqrt{p}}]$).
Then we delete edges of $G$ with a tiny probability, a probability so
small, that the resulting graph $F \sim \mathcal{G}(n,p')$ (where $p'$
is very close to $p$) shows very similar
properties to $G$ a.a.s. In particular, it will be true that $D(F)\leq I$ still holds
a.a.s.
On the other hand we will also show that with positive probability
the deletion process ruins every single dominating set with size from $I$,
a contradiction.

To conclude the similarity of the graph $G$ and the graph $F$ after the deletion
we need the following proposition involving convex graph properties.
A graph property (set of graphs) $Q$ is called {\em convex},
if for any three graphs $G\subseteq F\subseteq H$, from $G\in Q$ and $H\in Q$ 
one obtains $F\in Q$.
Since the graph property defined by the domination number being in some interval $I$ is
convex, the following proposition can be applied.

\begin{prop}
\label{similarity}
Let $Q$ be a convex graph property, $p(1-p)\binom{n}{2} \rightarrow \infty$, $x\in \mathbb{R}$, and set $p' = p + x\frac{\sqrt{p}}{n}$.
Further, suppose that  $G\sim \Gnp$ and $F \sim \mathcal{G}(n,p')$. Then
\[ G \in Q \text{ a.a.s. } \Rightarrow F \in Q \text{ a.a.s. }\]
\end{prop}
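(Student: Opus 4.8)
The plan is to avoid coupling $G$ and $F$ directly and instead to pass to the uniform edge-count model and exploit convexity there. A direct sandwich is hopeless: since $p'-p = x\sqrt{p}/n$, the edge count shifts by $\binom{n}{2}|x|\sqrt{p}/n$, which is of the same order as the standard deviation $\sigma := \sqrt{p(1-p)\binom{n}{2}}$ of the edge count, i.e. by $\Ocal(1)$ standard deviations; one therefore cannot trap $F$ between two copies of $\Gnp$ with probability tending to $1$. Write $N=\binom{n}{2}$, let $b_M = \prob(\Bin(N,p)=M)$ and $b'_M = \prob(\Bin(N,p')=M)$, and for $0\le M\le N$ set $A_M = \prob(\mathcal{G}(n,M)\in Q)$. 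Conditioning on the number of edges and using that $\Gnp$ restricted to having exactly $M$ edges is $\mathcal{G}(n,M)$ \emph{irrespective of the edge probability}, we get $\prob(\Gnp\in Q)=\sum_M b_M A_M$ and $\prob(F\in Q)=\sum_M b'_M A_M$ with the \emph{same} coefficients $A_M$. Setting $\eta_n := \prob(\Gnp\notin Q)=\sum_M b_M(1-A_M)\to 0$, the goal becomes $\sum_M b'_M(1-A_M)\to 0$.

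The first ingredient is an interval property of $A_M$ coming from convexity. Generate the graphs $\mathcal{G}(n,M)$ through a single monotone coupling: fix a uniformly random ordering of the $N$ possible edges and let $\mathcal{G}(n,M)$ consist of the first $M$ of them, so that $\mathcal{G}(n,M_1)\subseteq\mathcal{G}(n,M)\subseteq\mathcal{G}(n,M_2)$ whenever $M_1\le M\le M_2$. If $A_{M_1}\to 1$ and $A_{M_2}\to 1$, then a.a.s. both endpoints lie in $Q$, and convexity forces the intermediate graph into $Q$ as well; hence $\min_{M_1\le M\le M_2}A_M \ge A_{M_1}+A_{M_2}-1 \to 1$, uniformly over the interval.

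The second and crucial step is to produce, for an arbitrary fixed constant $K$, anchors $M_1\le Np'-K\sigma$ and $M_2\ge Np'+K\sigma$ with $A_{M_1},A_{M_2}\to 1$. This is exactly where the scaling $p'=p+x\sqrt{p}/n$ enters: it yields $|Np'-Np| = \binom{n}{2}|x|\sqrt{p}/n = \Ocal(\sigma)$, so the targets $Np'\pm K\sigma$ lie only a \emph{bounded} number of standard deviations from the mean $Np$ of $\Bin(N,p)$. Since the hypothesis $p(1-p)\binom{n}{2}\to\infty$ means $\sigma\to\infty$, the local central limit theorem guarantees that any window of width $\sigma$ placed at $\Ocal(1)$ standard deviations from $Np$ carries $\Bin(N,p)$-mass bounded below by a positive constant $\gamma=\gamma(K)$. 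Averaging $1-A_M$ against $b_M$ over such a window situated just below $Np'-K\sigma$ gives $\min_M(1-A_M)\le \eta_n/\gamma\to 0$, which furnishes the required $M_1$; the anchor $M_2$ is obtained symmetrically. Combined with the interval property, $A_M\to 1$ uniformly on $[Np'-K\sigma,\,Np'+K\sigma]$, and therefore $\prob(F\in Q)\ge(1-o(1))\,\prob(|e(F)-Np'|\le K\sigma)\ge(1-o(1))(1-1/K^2)$ by Chebyshev. Taking $n\to\infty$ and then letting $K\to\infty$ completes the argument.

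The main obstacle is precisely this anchor-finding step. What makes it work is keeping the anchors within $\Ocal(1)$ standard deviations of $Np$, so that the (small but) positive $\Bin(N,p)$-mass there, against which $\eta_n\to 0$ is averaged, remains bounded below; if the shift $p'-p$ were of larger order than $\sqrt{p}/n$, the anchors would be pushed into the far tail of $\Gnp$, where $\eta_n\to 0$ conveys no information about $A_M$, and both the argument and the statement would break down. I expect the only genuinely delicate points to be quantifying the local-limit lower bound on the window mass and tracking that $M_1$ stays nonnegative when $p\to 0$, both of which are routine given $\sigma\to\infty$ and $Np\to\infty$.
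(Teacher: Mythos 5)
Your proof is correct, but it takes a different route from the paper: the paper's entire proof of Proposition~\ref{similarity} is a one-line appeal to Theorem~2.2(ii) of Bollob\'as's book, which states that for a convex property $Q$ with $p(1-p)\binom{n}{2}\to\infty$, the graph $\Gnp$ has $Q$ a.a.s. if and only if $\mathcal{G}(n,M)$ does for every $M=\lfloor pN+X\sqrt{p(1-p)N}\rfloor$ with $X$ fixed; the proposition then follows because the shift $p'-p=x\sqrt{p}/n$ moves the expected edge count by only $\Ocal(1)$ standard deviations, so both $\Gnp$ and $\mathcal{G}(n,p')$ are governed by the same family of $\mathcal{G}(n,M)$'s. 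What you have written is, in effect, a self-contained proof of the relevant direction of that cited theorem: the decomposition $\prob(\Gnp\in Q)=\sum_M b_M A_M$, the convexity sandwich via the monotone coupling giving $A_M\geq A_{M_1}+A_{M_2}-1$ on the whole interval, and the local-limit lower bound on window masses used to extract the anchors $M_1,M_2$ are precisely the standard ingredients of Bollob\'as's argument. Your approach buys self-containedness (and makes transparent exactly why the scaling $\sqrt{p}/n$ is the right one) at the cost of having to invoke or prove the local CLT estimate $b_M=\Theta(1/\sigma)$ for $|M-Np|=\Ocal(\sigma)$, which you correctly identify as the only analytic input and which is routine given $\sigma\to\infty$. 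One point you should make explicit: $|Np'-Np|=(n-1)|x|\sqrt{p}/2$ is of order $\sigma/\sqrt{1-p}$ rather than $\sigma$, so the claim that the anchors sit at $\Ocal(1)$ standard deviations from $Np$ --- and indeed the proposition itself --- requires $1-p$ to be bounded away from $0$ (or $x^2/(1-p)$ bounded). This is harmless, since the proposition is only applied in the paper for $p=o(1)$, and the paper's own citation carries the same implicit restriction, but as stated for arbitrary $p<1$ the claim would fail for $p\to 1$.
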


\begin{proof}
This follows easily from Theorem~2.2 (ii) in~\cite{bollobasbook}:
if $Q$ is a convex graph property and $p(1-p)\binom{n}{2} \rightarrow \infty$, then {\Gnp}
has property $Q$ a.a.s. if and only if for all fixed $X$ the graph $\mathcal{G}(n,M)$ has $Q$ a.a.s.,
where $M= \left\lfloor p\binom{n}{2} + X\sqrt{p(1-p)\binom{n}{2}}\right\rfloor$.
\end{proof}

Let us start with the proof of (a) and assume that for some $c> 0$ and
$K > 0$ there is a probability 
$p(n)=p \leq \frac{c}{n}$ and interval $I=[i_1,i_2]\subseteq [n]$ of length 
$|I|< Kn\sqrt{p}$, such that $D(G)\in I$ a.a.s. 
We apply the two-round procedure as described above: we first 
draw $G \sim \Gnp$ then we delete every edge of $G$ with probability 
$p'':= 4K e^{2c} \lt(n\sqrt{p}\rt)^{-1}$, these choices being independent.
In the new graph $F$ every edge occurs with probability
$p' = p\lt(1-p''\rt) = p - 4K e^{2c} \frac{\sqrt{p}}{n}$, hence $F\sim \mathcal{G}(n,p')$.
By Proposition~\ref{similarity}, we know that
\begin{equation}
\label{DomF}
	D(G) \in I \text{ a.a.s. } \Rightarrow D(F) \in I \text{ a.a.s. }
\end{equation}
In the following we will show that $D(F)\not \in I$ a.a.s. This contradiction completes the proof. 

By a standard application of the second moment method  
the number of isolated edges in $G\sim\Gnp$ is concentrated a.a.s.~around its 
mean for $p\rightarrow 0$. 
That is, the number of isolated edges is $\frac{1}{2}n^2pe^{-2np}(1+o(1))$ a.a.s. 
At least $\frac{1}{2}(p''\frac{1}{2}n^2pe^{-2np}) \geq K n \sqrt{p}$ of these edges 
are deleted in the second round of our procedure a.a.s. The deletion of any isolated edge
increases the domination number by one, thus the domination number of $F$ 
is a.a.s. $Kn\sqrt{p}$ larger than the domination number of $G$. 
Hence a.a.s. the domination number of $F$ is in the interval $[i_1+ Kn\sqrt{p}, n]$,
which is disjoint from $I$, since the length of $I$ is less than $Kn\sqrt{p}$.
Hence the domination number of $F$ is not in $I$ a.a.s., which provides  
the contradiction.

Let us now turn to the proof of part (c) and suppose for the sake of contradiction that
there exist a constant $c> 0$ and probability $p(n)=p$ with $1/n\ll p \ll 1$
such that the domination number of $\Gnp$ is contained in $I =[1, r]$ a.a.s., 
where $r = \myr+C$ with $C=\left\lfloor c\frac{\myr}{n\sqrt{p}}\right\rfloor$.
Recall that according to~\eqref{rDefinition} and Lemma~\ref{expecSparse}, for $p\gg 1/n$ we have
$$\myr = \Ln \left(\frac{d}{\ln^2 d} \, (1+o(1)) \right).$$
Note that $C = o(1/p)$, 
so Observation~\ref{dict} and Lemma~\ref{expecJump} apply for $r$.

We apply a similar two-stage random procedure. First, we draw a graph $G$ from \Gnp\ 
and then we delete every edge with probability $p'' := 4c(\sqrt{p}n)^{-1}$, again
these choices being independent. 
As above, in the new graph $F$ every edge occurs with probability
$p' = p\lt(1-p''\rt) = p - 4c\frac{\sqrt{p}}{n}$, hence $F\sim \mathcal{G}(n,p')$.
By Proposition~\ref{similarity}, we know that 
$D(G) \in I \text{ a.a.s. } \Rightarrow D(F) \in I \text{ a.a.s.}$ 
In the following we will show that $D(F)\not \in I$ a.a.s. This contradiction completes the proof.

\newcommand{\cru}{\ensuremath{|\mathcal{C}_G(S)|}}
For a subset $S \subseteq V$ of the vertices, we call an edge $e=xs \in E$
{\em crucial with respect to $S$} if $s \in S$, $x \in V \setminus S$ and for all $s' \in S \setminus \{ s \}$, $xs' \notin E$.
That is, a crucial edge is the only connection of $x$ into $S$ in $G$.
In particular, if $S$ was dominating, the deletion of $e$ would result in $S$ not being dominating anymore. Set
\[ \mathcal{C}_G(S)= \{x \in V \setminus S |\, xs \in E(G) \text{ is crucial with respect to } S \text{ for some } s\in S\}. \]
Note that by the definition of a crucial edge, \cru\ counts exactly the number of crucial edges.

We denote by $B$ the event
that there exists a dominating set $S$ of size $r$ in $G$, such that none of its crucial edges have been destroyed, i.e.,
$\mathcal{C}_G(S)\subseteq N_F(S)$.
Clearly, $\bar{B}$ implies $D(F)\not \in I$, since no dominating set
of size $r$ in $G$ remains dominating in $F$ (and hence there is no
smaller dominating set either).
For a subset $S \in \binom{V}{r}$ of the vertices,
let $Y_S$ be the random variable counting the deleted crucial edges w.r.t. $S$
and denote by $D_S$ the event that $S$ is dominating in $G$.
By the union bound we have for every $f=f(n)>0$ that
\begin{align}
\label{FinalCalc1}
\prob (B) &= \prob\left(\exists S\in \binom{[n]}{r}:~D_S \text{ holds and } Y_S =0\right)\nonumber\\
&\leq \sum_{S\in \binom{[n]}{r}}
	\prob\left( D_S \text{ holds,}~Y_S =0 \text{ and }~\cru \geq f\right) \nonumber\\
&\qquad +\sum_{S\in \binom{[n]}{r}} \prob\left( D_S \text{ holds, }~Y_S=0 \text{ and }~\cru < f \right) \nonumber\\
&\leq \sum_{S\in \binom{[n]}{r}} \prob\left(D_S\right)\cdot
	\prob\left(Y_S=0 \ |\   D_S\mbox{ holds and } \cru \geq f\right)\nonumber\\
&\qquad +\sum_{S\in \binom{[n]}{r}} \prob\left(D_S\right)\cdot  \prob\left(\cru < f \ |\
D_S\right).
\end{align}
We start by estimating the second sum.
Let $S \in \binom{[n]}{r}$. We will
observe that $|\mathcal{C}_G(S)|$, conditioned on $S$ being dominating in $G$,
is a binomially distributed random variable.
To this end, for vertices $v \in V\setminus S$, define the events
\begin{align*}
A_v &= \{ v \in \mathcal{C}_G(S)\, |\, D_S \}
= \{ b_v=1\, |\, b_w\neq 0\  \forall\ w \in V\setminus S \},
\end{align*}
where $b_v$ is the number of edges of $G$ among the pairs $\{vs : s \in S\}$.
The edge sets $\{vs : s \in S\}$ are pairwise disjoint, hence the random variables $b_v$, and in turn the
events $A_v$ are mutually independent.

The random variable $\cru$ conditioned on $D_S$ is then the sum of
$(n-r)$ mutually independent random variables: the indicator random variables of the events $A_v$.
These are $1$ with probability
$p^*:= \prob (A_v ) =  \prob(b_v =1) / \prob(b_v \neq 0)  \geq \prob(b_v =1) = rp(1-p)^{r-1}.$
Hence for the expectation of  \cru\ we have
\begin{align*} \mu &:= \expec (\cru \, |\, D_S) = (n-r)p^* \geq (n-r) pr(1-p)^{r-1}.	
\end{align*}
Thus by Chernoff's inequality (see e.g.~\cite{as2008}), plugging in $f=\mu/2$
we have that
\begin{align} \label{secondSummand}
\prob\left(\cru < \frac{\mu}{2}\ \Big| \ D_S\right)
	 &< \exp\left[ -\frac{\mu}{8} \right]
\end{align}
for large enough $n$.

Now, we bound the first sum in \eqref{FinalCalc1}.
Observe that conditioning on $\cru$ being a fixed integer $\ell$, we have $Y_S \sim \Bin(\ell, p'')$,
where $p''$ is the probability that an edge is deleted from $G$.
Furthermore, observe that once we condition on $\cru$ taking one fixed value,
no other information about $G$ influences the distribution of $Y_S$,
especially not the fact that $S$ is dominating in $G$, so
\begin{align*}
\prob\lt(Y_S =0\, |\, \cru = \ell \text{ and } D_S\rt) & =
\prob\lt(Y_S =0\, |\, \cru = \ell \rt)= (1-p'')^\ell.
\end{align*}
Hence, if $G$ was drawn such that $\cru\geq  \mu/2$,
then
\begin{equation}
\label{easy}
\prob\lt(Y_S = 0 | \text{$D_S$ holds and }~\cru \geq  \mu/2\rt)\leq(1-p'')^{\mu/2}.
\end{equation}
Combining \eqref{secondSummand} and \eqref{easy} we obtain in \eqref{FinalCalc1} that
\begin{align}
\prob(B) &\leq \sum_{S\in \binom{[n]}{r}}
	\prob\left(D_S\rt)\cdot \left((1-p'')^{\mu/2} + \exp \lt[- \mu/8 \rt]\right)\nonumber\\
&\leq E(X_r)\cdot\exp \lt[- p''\mu/3 \rt]\label{firstSummand},
\end{align}
since $p''\rightarrow 0$.
Since $p\gg 1/n$, we see that
\[\mu \geq (n-r) pr(1-p)^{r-1}=(1-o(1))r\ln^2 d\]
by Observation \ref{dict}~\eqref{dictiii}.
Hence, plugging it into~\eqref{firstSummand} and using Lemma~\eqref{expecJump}, we obtain
\[\prob(B)\leq \exp\lt(C\ln^2 d (1+o(1))-(1+o(1))x\frac{r \ln^2 d}{3n\sqrt{p}}\rt)\rightarrow 0\]
for $x\geq 4c$.
Hence, $\prob( D(F)\not\in I) \geq \prob\lt(\bar{B}\rt) \rightarrow 1$, 
 a contradiction. This completes the proof of part (c).

\end{proof}

\begin{proof}[Proof of Theorem \ref{newNonConc} $(b)$]
Let $G\sim\Gnp$ and assume that there is some $c>0$ such that for all $\eps >0$ there exists 
a sequence $p=p(n)$ with $\frac{c}{n}\leq p \ll 1$ such that $\Pr(D(G) >\myr +\eps n e^{-2np})\nrightarrow 1$. 
Let $r(n)= \myr + \eps n e^{-2np}$, where we fix $\eps$ later. 
Then there is a $\delta >0$ and a subsequence $\pi_n$ such that 
\begin{equation}\label{aux749}
\Pr\bigg(D\big(G(\pi_n,p(\pi_n)\big) \leq r(\pi_n)\bigg)>\delta.
\end{equation}
We distinguish two cases. 
Either $(1)$ we have that $1/\pi_n\ll p(\pi_n)$, 
or $(2)$ there is yet another subsequence $\tau_n$ of $\pi_n$ and a $K>0$ 
such that $p(\tau_n)\leq K/\tau_n$. 
We will deal with both cases simultaneously, splitting the proof into a short case distinction 
whenever necessary and reaching a contradiction at the end. 
For simplicity of notation, we assume that $\tau$ (and $\pi$) is the identity function, 
as the proof obviously follows the same lines whenever we restrict to a subsequence of
the natural numbers. 

We note first that for any graph $F$ on $n$ vertices that has (at least) $m$ isolated edges that 
\begin{itemize}
\item[$(i)$] $D(F)\leq n-m$, and 
\item[$(ii)$] for any integer $k$ such that $D(F) \leq k \leq n - m$, 
	the number of dominating sets of size $k$ is at least $2^m$. 
\end{itemize}
To see this, let $W\subseteq V$ be the vertex set of $m$ isolated edges. 
Then $V\setminus W$ together with one vertex from each of those $m$ isolated edges 
forms a dominating set of $G$, showing $(i)$. 
For $(ii)$, let $S\subseteq V\setminus W$ be a dominating set of $F[V\setminus W]$ 
of size $k-m$, which exists by the conditions on $k$. 
Then $S$ can be extended to a dominating set of $F$ of size $k$ by taking exactly one 
vertex from each isolated edge, and there are $2^m$ ways of doing so.

As it was noted in the proof of (a) above
the number of isolated edges is $\frac{1}{2}n^2pe^{-2np}(1+o(1))$ a.a.s.~whenever $p\rightarrow 0$. 
We claim that for $n$ large enough 
\begin{equation}\label{aux399}
r(n)=\myr + \eps ne^{-2np} \leq n - \frac{1}{4}n^2pe^{-2np}.
\end{equation}
In case~$(1)$, when $p\gg 1/n$, $n^2pe^{-2np}=o(n)$, $ne^{-2np}=o(n)$ and 
hence $r(n) = o(n)$ by Observation \ref{dict}. 
In case~$(2)$, when $p=\Theta(1/n)$, since $G$ has at least 
$m_0=0.4n^2pe^{-2np}$ isolated edges a.a.s.,
the domination number satisfies $D(G)\leq n-m_0$ 
by $(i)$ a.a.s. Then by $(ii)$, the expected number of dominating sets of size 
$n-m_0$ is at least $(1+o(1))2^{0.4 n^2 p e^{-2np}} \geq 1$ for large enough $n$.
Hence, 
$\myr < n-m_0$ by the definition of $\myr$.
Now, since $ne^{-2np} = \Theta (n^2pe^{-2np})$, we can choose $\eps>0$ 
small enough 
and sandwich $r(n)$ between $\myr$ and $n - 0.25 n^2pe^{-2np}$ 
so that \eqref{aux399} holds. 

By our assumption \eqref{aux749} we have that $D(G)\leq r(n)$ with probability at least 
$\delta$. 
Furthermore, $G$ has at least $\frac{1}{4}n^2pe^{-2np}$ isolated edges a.a.s. 
It follows thus by $(ii)$ and \eqref{aux399} that 
the number $X_{r(n)}$ of dominating sets of size 
$r(n)$ satisfies $X_{r(n)} \geq 2^{n^2p\exp(-2np)/4}$ with probability   
at least $\delta +o(1)$. 
Therefore, $\expec(X_{r(n)}) \geq \delta 2^{n^2p\exp(-2np)/4}(1+o(1))$. 

We would thus obtain a contradiction if we also deduce that 
$\expec(X_{r(n)}) \ll 2^{n^2p\exp(-2np)/4}$ 
in both cases. 
In case $(1)$, the expected number of dominating sets of size 
$\hat{r}+ n e^{-2np}$ is bounded from above by 
$\exp\lt((1+o(1)\ln^2d n\exp(-2np)\rt)=o\lt(2^{n^2p\exp(-2np)/4}\rt)$, 
by Lemma~\ref{expecJump}, yielding the desired contradiction. 

In case $(2)$, we assume that $\frac{c}{n}\leq p\leq \frac{K}{n}$. 
Note that then $2^{n^2p\exp(-2np)/4} \geq 2^{\frac{c}{4}e^{-2K}n} = 2^{c'n}$. 
Let now $k\leq n$ be a general integer again and parametrize $k=c_kn$. 
Then the expected number of dominating sets of size $k$ 
is 
\begin{align}
\expec(X_k) &= \binom{n}{k}\left( 1- (1-p)^k\right)^{n-k}\nonumber\\
		&\leq 2^{H(c_k) n} \left(1-e^{-K\cdot c_k (1+o(1))}\right)^{(1-c_k)n}\nonumber\\
		&= 2^{\left(H(c_k) +(1-c_k)\log_2(1-e^{-K\cdot c_k})\right) (1+o(1)) n},
\end{align}
where $H(x)$ is the binary entropy function. 
Consider the function 
$$f(x):= H(x) +(1-x)\log_2(1-e^{-K\cdot x})$$ on the interval $[0,1]$. 
Using standard calculus one can see that $f(x)$ 
can have at most one local maximum in $[0,1]$. 

Since $f(x)\rightarrow - \infty$ for $x\rightarrow 0$  
and for $x$ being sufficiently close to 1, 
\begin{align*}
f(x) 	
	&= -x\log_2(x)+(1-x)\log_2\left(\frac{1-e^{-Kx}}{1-x}\right) >0,
\end{align*}
$f(x)$ does have a local maximum in $(0,1)$ which is positive.
Now it follows by continuity that $f(c_0)=0$ for a unique $c_0\in (0,1)$ and that 
$f(x)$ is increasing in an $\eps'$-neighbourhood of $c_0$.
Note that $\myr \leq (c_0+o(1)) n$. 
So it is possible to choose $\eps >0$ such that 
for $r(n)= \myr + \eps n e^{-2np} = (c_0+\eps') n$ it holds that 
$0< f(c_0+\eps') < c'$. Hence, 
\begin{align*}
\expec (X_{r(n)}) &\leq 2^{(1+o(1)) f(c_0+\eps') n} \ll 2^{c'n} \leq 2^{n^2p\exp(-2np)/4}. \qedhere
\end{align*}
\end{proof}

%
%

\section{Concluding remarks and open problems}
\label{conclusion}

As was noted in the introduction, part (c) of Theorem~\ref{newNonConc} implies that for $p\leq (\ln n / n)^{2/3}$
the domination number $D(\Gnp)$ is not concentrated a.a.s.~on any constant length interval around $\myr$.
It would be interesting to improve this result in a couple of directions.
On the one hand, it seems reasonable to believe that
the power $-\frac{2}{3}$ in the upper bound  on the edge probability $p$ could be pushed up to $-\frac{1}{2}$, hence
making Theorem~\ref{MAIN} tight up to a polylogarithmic factor.
On the other hand it is unsatisfactory that our current proof for parts (b) and (c) of Theorem~\ref{newNonConc} requires
the extra assumption that the concentration interval is around
$\myr$. 
In fact, the lesson we take out of part (b) of Theorem~\ref{newNonConc} 
is that $\myr$ is by far the false location for any interval of concentration for very small values of $p$.
It would be desirable to obtain a result stating the non-concentration of $D(\Gnp)$ on 
{\em any} constant-length interval, independent of its location --- like we have it for $p=O(1/n)$.

It would also be interesting  to learn more about the concentration of the domination number
of $\Gnp$ in case
$p = \Ocal\lt(\ln^2 n /\sqrt{n}\rt)$.
Theorem~\ref{MAINTAL} does provide a concentration interval of length slightly above $\sqrt{n}$ for all $p$.
Calculating the $o(1)$-term in
Proposition~\ref{SomeConcentration} gives a bound of the order $n\frac{\ln\ln d}{d}$ on the length of the
concentration interval. 
This is better than the one from Theorem~\ref{MAINTAL} for $p \geq \frac{\ln\ln n}{\sqrt{n}}$.

Recall from the introduction that for the range $p=\Theta( \frac{1}{n})$, part (a) of
Theorem~\ref{newNonConc} and Theorem~\ref{MAINTAL} implies that there is 
no concentration on any interval of size $O(\sqrt{n})$, but there is concentration on some
interval of size $\sqrt{n}f(n)$ for any function $f(n)\rightarrow \infty$. Theorem~\ref{MAIN}
resolves the question of the shortest interval length the domination number of $\Gnp$ 
is concentrated a.a.s.~for $p\gg \frac{\log^2 n}{\sqrt{n}}$. 
Ideally one would like to know the length of the shortest concentration interval 
for every $p$. We would conjecture that mostly non-concentration statements are
missing in the range $p \ll \frac{1}{\sqrt{n}}$; Theorem~\ref{newNonConc} is only
a first step in this direction.
Similar questions are also wide open for the chromatic number~\cite{ak1997}: 
we know, for example, that the chromatic number of ${\mathcal G} (n,1/2)$ is 
concentrated a.a.s.~on an interval of length $\sqrt{n}$, 
but we do not know whether it is concentrated on an interval of length two.\\

\textbf{Acknowledgement.} We are grateful to Michael Krivelevich for fruitful discussions
and especially for his suggestions to the proof of part (c) of Theorem~\ref{newNonConc}. 
We also want to thank the anonymous referee for many helpful remarks and 
in particular the proof idea of part (b) of Theorem~\ref{newNonConc}.

\bibliographystyle{abbrv}
\bibliography{referencesAll}

\end{document}